\pgfplotsset{compat=1.18}
\tikzset{description/.style={fill=white,midway,font=\small}}
\title{The Hochschild cohomology ring of monomial algebras}
\date{\today}
\keywords{Hochschild cohomology, cup product, monomial algebra, triangular algebra, diagonal map, Bardzell resolution}
\subjclass[2020]{16E40 (primary), 18G10}
\author[D.~Artenstein]{Dalia Artenstein}
\address{Dalia~Artenstein, Institute of Mathematics and Stadistics Rafael Laguardia, Faculty of Engineering, University of the Republic, Av. Julio Herrera y Reissig 565, Montevideo, Uruguay}
\email{darten@fing.edu.uy}
\author[J.~C.~Letz]{Janina C. Letz}
\address{Janina~C.~Letz,
Faculty of Mathematics,
Bielefeld University,
PO Box 100 131,
33501 Bielefeld,
Germany \newline
UCLA Department of Mathematics,
PO Box 951555, 
Los Angeles, CA 90095, 
USA}
\email{jletz@math.uni-bielefeld.de}
\author[A.~Oswald]{Amrei Oswald}
\address{Amrei~Oswald, Department of Mathematics, University of Washington, Seattle, WA 98195, USA}
\email{amreio@uw.edu}
\author[A.~Solotar]{Andrea Solotar}
\address{Andrea~Solotar, Departamento de Matem\'atica, 
 Facultad de Ciencias Exactas y Naturales, Universidad de Buenos Aires, 
 and IMAS-CONICET,
 Pabellon I -- Ciudad Universitaria,
 Buenos Aires, 1428\\ 
 Argentina}
\email{asolotar@dm.uba.ar}
\begin{document}

\begin{abstract}
We give an explicit description of a diagonal map on the Bardzell resolution for any monomial algebra, and we use this diagonal map to describe the cup product on Hochschild cohomology. Then, we prove that the cup product is zero in positive degrees for triangular monomial algebras. Our proof uses the graded-commutativity of the cup product on Hochschild cohomology and does not rely on explicit computation of the Hochschild cohomology modules.
\end{abstract}

\maketitle

\tableofcontents


\section{Introduction}

Given a field $\kk$ and an associative $\kk$-algebra $A$, the Hochschild cohomology of $A$ with coefficients in the $A$-bimodule $A$ is the graded vector space 
\[
\HH A = \bigoplus_{i \geqslant 0}\Ext{i}{A \otimes \op{A}}{A}{A}\,.
\]
The Hochschild cohomology of an associative algebra has a rich structure, which provides several derived invariants that are useful when studying the representations of the given algebra. The cup product, $\smile:\HH A \otimes \HH A \to \HH A$, gives $\HH A$ the structure of a graded-commutative $\kk$-algebra, as has been proved in several different ways. The proof of Murray Gerstenhaber \cite{Gerstenhaber:1963} gave rise to the definition of the Gerstenhaber bracket.
Both the cup product and the Gerstenhaber bracket were originally defined in terms of the bar resolution, which is usually very inefficient for explicit computations of the Hochschild cohomology. It has been known for a long time that the Hochschild cohomology can be computed by any resolution of $A$ as an $A$-bimodule; see \cite[Chapter~IX, \S6]{Cartan/Eilenberg:1956}. However, that the cup product and Gerstenhaber bracket can be computed using any resolution are more recent results; see \cite{Siegel/Witherspoon:1999} and \cite{Negron/Witherspoon:2016,Volkov:2019} respectively. In fact, the cup product coincides with the Yoneda product.

It is possible to avoid using the bar resolution for the computation of the cup product and use any resolution $P$---perhaps minimal---whenever it is equipped with a diagonal map $\Delta \colon P \to P \otimes_A P$; see for example \cite{Witherspoon:2019}. We use this strategy to compute the cup product for triangular monomial algebras and prove the following theorem.

\begin{introthm}[\cref{main}] \label{intromain}
Consider a finite triangular monomial algebra, $A$. The cup product on $\HH{A}$ is zero in positive degrees.
\end{introthm}

There are many classes of algebras for which the cup product is zero in positive degrees. 
In \cite[Lemma~3.1]{Cibils:1998}, Claude Cibils showed this for a finite dimensional path algebra $A=\kk Q/I$ with radical square zero and a finite quiver $Q$ that is not a \emph{crown}. In \emph{loc.~cit.} cycles are called crowns. As described in \cite{Green/Solberg:2002}, there exist self-injective algebras which have nonzero cup product in positive degrees. Moreover, in \cite{Green/Solberg:2002}, Edward Green and {\O}yvind Solberg construct many algebras that are not self-injective for which the cup product is nonzero in positive degrees. Next, Juan Carlos Bustamante \cite{Bustamante:2006} proved that the cup product is zero in positive degrees for triangular quadratic string algebras and conjectured that this holds for any triangular monomial algebra as in \cref{intromain}. In \cite[Theorem~5.3]{Redondo/Roman:2014}, Mar{\'\i}a Julia Redondo and Lucrecia Rom{\'a}n verified this conjecture for triangular string algebras.

For the description of the Hochschild cohomology, we use the Bardzell resolution introduced in \cite{Bardzell:1997}. This is a minimal resolution of $A$ as an $A$-bimodule, hence it is much more efficient than the bar resolution. We give an explicit description of a diagonal map on the Bardzell resolution for any monomial algebra in \cref{diagonal}. We then use this diagonal map and the fact that the cup product is graded-commutative to prove \cref{intromain} in \cref{sec:cup-product}. Our proof does not rely on an explicit computation of the Hochschild cohomology modules. 

\begin{ack}
We started working on this article during WINART3, and we are grateful to the organizers of that meeting. We would also like to thank Sibylle Schroll for her help and suggestions. Letz was partly supported by the Deutsche Forschungsgemeinschaft (SFB-TRR 358/1 2023 - 491392403) and by the Alexander von Humboldt Foundation in the framework of a Feodor Lynen research fellowship endowed by the German Federal Ministry of Education and Research. Oswald was supported by a grant from the Simons Foundation Targeted Grant (917524) to the Pacific Institute for the Mathematical Sciences. Artenstein and Solotar were partially supported by the project Mathamsud-AREPTHEO. Solotar was partially supported by PIP-CONICET 11220200101855CO. She also thanks Guangdong Technion Israel Institute of Technology, where part of this work has been done.
\end{ack} 

\section{Ambiguities}

Let $\kk$ be a field and $A = \kk Q/I$ a finite dimensional monomial algebra. We denote by $\cB$ a basis of paths of $A$; such a basis is unique since $A$ is monomial. In particular, a path is zero in $A$ if and only if it is not contained in $\cB$. 

\begin{notation}
A finite quiver $Q = (Q_0,Q_1,s,t)$ consists of
\begin{enumerate}
\item a finite set of \emph{vertices} $Q_0$, 
\item a finite set of \emph{arrows} $Q_1$, 
\item a source map $s \colon Q_1 \to Q_0$, and
\item a target map $t \colon Q_1 \to Q_0$. 
\end{enumerate}
A \emph{path} $p$ is a sequence of arrows $\alpha_n \ldots \alpha_1$ where $t(\alpha_i) = s(\alpha_{i+1})$. Graphically the path can be depicted as
\begin{equation*}
\begin{tikzcd}
\bullet \ar[r,"\alpha_1"] \& \bullet \ar[r,"{\alpha_2}"] \& \ldots \ar[r,"{\alpha_n}"] \& \bullet \nospacepunct{.}
\end{tikzcd}
\end{equation*}
We set $s(p) \colonequals s(\alpha_1)$ the \emph{source of $p$} and $t(p) \colonequals t(\alpha_n)$ the \emph{target of $p$}. We say the \emph{length} of the path $p = \alpha_n \ldots \alpha_1$ is $n$. We identify a path of length zero with its vertex. We write $qp$ for the concatenation of the paths $p$ and $q$ if $t(p) = s(q)$. 

Let $p$ and $q$ be paths such that $p = b q a$. We say $q$ is a \emph{divisor} of $p$. In general the position of $q$ in $p$, that is the paths $a$ and $b$, need not be unique. We write $q \leq p$ to mean that $q$ is a divisor of $p$ and the position of $q$ in $p$ is fixed. In this situation we set
\begin{equation*}
\pre_p(q) \colonequals a \quad \text{and} \quad \suf_p(q) \colonequals b
\end{equation*}
for the \emph{prefix} and \emph{suffix of $q$ in $p$}, respectively. We drop the subscript when it is clear from the context. If $b = 1$, we say $q$ is a \emph{suffix} of $p$, and if $a = 1$, we say $q$ is a \emph{prefix} of $p$. If $q$ is a divisor/suffix/prefix of $p$ and $p \neq q$, we say $q$ is a \emph{proper} divisor/suffix/prefix, and we write $q \lneq p$. 
\end{notation}

\begin{definition} \label{ambiguity}
Let $n \geq -1$ be an integer. A path in $Q$ is a \emph{left $n$-ambiguity} if it decomposes as $p = u_{-1} \ldots u_n$ such that
\begin{enumerate}
\item \label{ambiguity:pieces} $u_{-1} \in Q_0$, $u_0 \in Q_1$, $u_i \in \cB$, and
\item \label{ambiguity:minimal} for any $0 \leq i \leq n-1$ we have $u_i u_{i+1} \in I$ and any proper suffix of $u_i u_{i+1}$ does not lie in $I$. 
\end{enumerate}
In practice, we drop the trivial path $u_{-1}$ and write $u_0 \ldots u_n$ for the decomposition of a left ambiguity. 

Dually, a path $p$ is a \emph{right $n$-ambiguity} if it decomposes as $p = v_n \ldots v_{-1}$ such that the above properties hold when we replace suffix by prefix.
\end{definition}

Left ambiguities are called \emph{right $n$-chains} in \cite[Section~3]{Skoeldberg:2008}. Our use of the term ambiguity follows \cite[Definition~3.1]{Chouhy/Solotar:2015} and alludes to the fact that left and right ambiguities are equivalent; see \cite[Lemma~3.1]{Bardzell:1997} and also \cite[Lemma~1]{Skoeldberg:2008}. Hence we will simply say $n$-ambiguity instead of left or right $n$-ambiguity. Moreover, given an $n$-ambiguity $p$ and decompositions as left and right ambiguities $p = u_0 \ldots u_n = v_n \ldots v_0$, respectively, we have
\begin{equation} \label{compare_left_right}
v_i \leq u_{n-i} u_{n-i+1} \quad \text{and} \quad u_i \leq v_{n-i+1} v_{n-i}
\end{equation}
for any $1 \leq i \leq n$. 

We denote the set of $n$-ambiguities by $\Gamma_n$. This set is denoted by $AP(n+1)$ in \cite{Bardzell:1997} and by $AP_{n+1}$ in \cite{Redondo/Roman:2018a,Redondo/Roman:2018b}. 

For small $n$, we have the following descriptions of the sets of ambiguities $\Gamma_n$.
\begin{enumerate}
\item The $(-1)$-ambiguities are precisely the trivial paths of length zero, that is $\Gamma_{-1} = Q_0$. 
\item The $0$-ambiguities are precisely the arrows, that is $\Gamma_0 = Q_1$. 
\item The $1$-ambiguities are the minimal generating set of paths of $I$. 
\end{enumerate}

For quadratic algebras the ambiguities have particularly nice decompositions:

\begin{example}
Let $A = \kk Q/I$ be a finite dimensional quadratic monomial algebra. Then $I$ is generated by paths of length 2. For any left ambiguity $p = u_0 \ldots u_n$ each $u_i$ is an arrow in $Q$ and $u_i u_{i+1}$ lies in the minimal generating set of $I$. In particular, setting $v_i \colonequals u_{n-i}$ yields a decomposition of $p$ as a right ambiguity. Moreover, for any $0 \leq i \leq n-1$ the path $u_i u_{i+1}$ is a 1-ambiguity.
\end{example}

The latter property needs not hold when $I$ is generated by paths of length more than 2. In this case, $u_i u_{i+1}$ can have a proper \emph{prefix} that lies in $I$. 

\begin{example} \label{counterex-2-amb-I}
We consider the quiver 
\begin{equation*}
Q = 
\begin{tikzcd}
\bullet \ar[d,"\alpha" swap] \& \bullet \ar[l,"\delta" swap] \\
\bullet \ar[r,"\beta" swap] \& \bullet \ar[u,"\gamma" swap]
\end{tikzcd}
\quad \text{with}\quad I = (\gamma \beta \alpha,\alpha \delta \gamma)\,.
\end{equation*}
Then, $\alpha | \delta \gamma | \beta \alpha$ is a left 2-ambiguity. The notation indicates that $u_0 = \alpha$, $u_1 = \delta \gamma$ and $u_2 = \beta \alpha$. In this left 2-ambiguity, we have $u_1u_2 = \delta \gamma \beta \alpha \gneq \gamma \beta \alpha \in I$. 
\end{example}

The following \namecref{amb_containment} utilizes that $u_{i+1}$ is chosen minimally so that $u_i u_{i+1}$ lies in $I$. It will be used repeatedly in the sequel.

\begin{lemma} \label{amb_containment}
Let $m \geq n$ be integers and let 
\begin{equation*}
p = u_0 \ldots u_m \geq u'_0 \ldots u'_n = q
\end{equation*}
be decompositions as left $m$- and $n$-ambiguities, respectively. We assume $\ell \leq m$ is maximal such that $u_\ell \ldots u_m \geq q$. Then
\begin{equation} \label{amb_containment:comparison}
\begin{aligned}
u_{\ell+i} \geq u'_i & \quad \text{when $i$ even} \\
u_{\ell+i} \leq u'_i & \quad \text{when $i$ odd}
\end{aligned} \quad \text{for} \quad 0 \leq i \leq n\,.
\end{equation}
In particular, this yields $m \geq l+n$ and
\begin{enumerate}
\item if $n$ is even, then $u_\ell \ldots u_{\ell+n} \geq q$ and $u_\ell \ldots u_{\ell+n-1} \not\geq q$.
\item if $n$ is odd, then $u_{\ell+1} \ldots u_{\ell+n} \leq q$. 
\end{enumerate}
Moreover, if $n$ is odd and $q$ a suffix of $u_\ell \ldots u_m$, then $q = u_\ell \ldots u_{\ell+n}$. 

An analogous statement holds for right ambiguities.
\end{lemma}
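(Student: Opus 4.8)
The plan is to translate the defining minimality property of ambiguities (\cref{ambiguity}) into a statement about the positions of the cut points of the pieces $u_i$ inside the fixed path $p$, and then to run an induction on $i$ in which the alternation $\geq/\leq$ is forced by this minimality together with the fact that $I$ is an ideal.

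First I would fix coordinates along $p$, recording each divisor by its source-- and target--endpoints, so that a left ambiguity $u_0\ldots u_m$ is encoded by the sequence of cut points, with the pieces contiguous: the target end of $u_{i+1}$ equals the source end of $u_i$. The key reformulation of \cref{ambiguity} is that, for each $i$, the piece $u_{i+1}$ is the \emph{minimal} one with $u_iu_{i+1}\in I$; equivalently, the source end $s(u_{i+1})$ is the largest position $\sigma$ for which the subpath $[\sigma,\tau]$ of $p$ from $\sigma$ to the target end $\tau=t(u_i)$ lies in $I$. From this I extract the single engine of the argument, a monotonicity statement: if $\tau\le\tilde\tau$, then the largest $\sigma$ with $[\sigma,\tau]\in I$ is at most the largest $\sigma$ with $[\sigma,\tilde\tau]\in I$. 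This holds because extending a relation $[\sigma,\tau]\in I$ on the target side only enlarges it, and the enlarged path still contains $[\sigma,\tau]$ as a prefix, hence lies in $I$. The asymmetry to keep in mind is that this works only when one \emph{extends} toward the target; chopping a relation at the target need not keep it in $I$, and every step below is arranged so that membership in $I$ is used only in this safe direction.

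With $w_i\colonequals u_{\ell+i}$, the main induction on $0\le i\le n$ proves \eqref{amb_containment:comparison} as follows. For the base case $i=0$, maximality of $\ell$ forces the target end of $q$ to lie inside $u_\ell$, which gives $u_\ell\ge u'_0$. In the inductive step, contiguity converts the source--endpoint inequality proved at index $i$ directly into the target--endpoint inequality needed at index $i+1$ (since $t(w_{i+1})=s(w_i)$ and $t(u'_{i+1})=s(u'_i)$); the remaining source--endpoint inequality at index $i+1$ is exactly the monotonicity statement applied to the relations $w_iw_{i+1}$ and $u'_iu'_{i+1}$, whose target ends are already comparable from step $i$. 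The alternation of the inequality direction is precisely the swap of which of $p,q$ has the larger target end at consecutive steps. Along the way I would check $m\ge\ell+n$: if some $w_i$ were the last piece $u_m$ while $i<n$, then the remaining pieces $u'_{i+1},\ldots,u'_n$ of $q$ would have to fit inside $u_m$, producing either a subpath of the basis path $u_m$ that lies in $I$ (impossible, as $u_m\in\cB$) or no room at all.

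Statements (1) and (2) are then read off by specializing \eqref{amb_containment:comparison} at $i=n$ and $i=n-1$ and using contiguity. The part I expect to be the main obstacle is the final claim: for $n$ odd and $q$ a suffix of $u_\ell\ldots u_m$, that $q=u_\ell\ldots u_{\ell+n}$. From (2) one already has $u_{\ell+1}\ldots u_{\ell+n}\le q$, so only the reverse containment is missing, namely that $s(u_{\ell+n})\le s(q)$. The naive attempt would chop the relation $w_{n-1}w_n$ down to the smaller target end of $q$, which is exactly the forbidden direction. Instead I would use that $q$ being a suffix makes the target ends of $q$ and of $u_\ell\ldots u_m$ coincide, and then run a second, parallel induction from this shared target end showing $t(w_i)\le t(u'_i)$ and $s(w_i)\le s(u'_i)$ for all $i$ --- each step again an instance of the monotonicity statement in the safe direction. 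This yields the missing inequality at $i=n$, and combined with (2) forces equality, hence $q=u_\ell\ldots u_{\ell+n}$. Finally, the statement for right ambiguities follows by the evident left--right symmetry, replacing suffixes by prefixes throughout.
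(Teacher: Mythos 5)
Your proposal is correct and follows essentially the same route as the paper: you prove \cref{amb_containment:comparison} by induction on $i$, where each step is precisely the paper's observation that $u'_iu'_{i+1}\in I$ while no proper suffix of $u_{\ell+i}u_{\ell+i+1}$ lies in $I$ (your ``monotonicity statement'' is just a coordinate reformulation of this suffix-minimality). The only divergences are minor: for the final claim the paper proves $u_\ell\ldots u_{\ell+i}=u'_0\ldots u'_i$ directly for odd $i$ (two relations sharing their starting vertex, neither having a proper suffix in $I$, must coincide), whereas you propagate one-sided endpoint inequalities from the shared target end and combine them with \cref{amb_containment:comparison} at $i=n$ --- equally valid --- and your explicit verification that $m\ge\ell+n$ spells out a point the paper leaves implicit.
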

\begin{proof}
We show \cref{amb_containment:comparison} by induction on $i$. By assumption the claim holds for $i=0$. We assume $u_{\ell+i} \geq u'_i$. Since no proper suffix of $u_{\ell+i} u_{\ell+i+1}$ lies in $I$ and $u'_i u'_{i+1} \in I$, we have $u_{\ell+i+1} \leq u'_{i+1}$. A similar argument yields $u_{\ell+i+1} \geq u'_{i+1}$ when {$u_{\ell+i} \leq u'_i$}. 

Now let $n$ be odd and $q$ a suffix of $u_\ell \ldots u_m$. We show that, in this case, we have $u_\ell \ldots u_{\ell+i} = u'_0 \ldots u'_i$ when $i$ is odd in the induction above. For $i=-1$ there is nothing to show. We assume it holds for $i$. Then $u'_{i+1} u'_{i+2}$ and $u_{\ell+i+1} u_{\ell+i+2}$ have the same source, so one is the suffix of the other. Since both lie in $I$ and neither has a proper suffix in $I$, they are equal. Taking $i=n$ yields the desired statement.
\end{proof}

\begin{corollary} \label{amb_uniqueness}
Let $p$ be an $n$-ambiguity.
\begin{enumerate}
\item No proper divisor of $p$ is an $n$-ambiguity.
\item The decompositions of $p$ as a left and right ambiguity are unique. \qed
\end{enumerate}
\end{corollary}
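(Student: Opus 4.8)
The plan is to deduce both items from a single \emph{rigidity} observation: if a left $n$-ambiguity $q = u'_0 \ldots u'_n$ is a suffix of a left $n$-ambiguity $p = u_0 \ldots u_n$, then the two decompositions agree piece by piece, so in particular $q = p$. Item (2) is then the tautological case $q = p$ (with its two decompositions), while for item (1) I would first use \cref{amb_containment} to show that a proper divisor which happens to be an $n$-ambiguity is forced to be a suffix, and then invoke the rigidity observation to reach a contradiction.

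For the rigidity claim I would prove $u_i = u'_i$ for all $0 \le i \le n$ by induction on $i$. Since $q$ is a suffix of $p$, the path $q$ coincides with $p$ on an initial segment of the underlying word, and since $u_0$ and $u'_0$ are arrows occupying the first letter they must agree. For the inductive step, assuming $u_j = u'_j$ for $j \le i$ with $i \le n-1$, the equality of initial segments forces $u_{i+1}$ and $u'_{i+1}$ to start at the same place, so one is a suffix of the other; prepending the common piece $u_i = u'_i$, one of $u_i u_{i+1}$ and $u'_i u'_{i+1}$ becomes a suffix of the other. As both lie in $I$ and neither admits a proper suffix in $I$, they must coincide, whence $u_{i+1} = u'_{i+1}$. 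This last step, which is exactly the minimality requirement of \cref{ambiguity}, is where I expect the most care to be needed: one must track that a suffix of a path sits at the \emph{front} of the underlying word, so that the divisibility of the pieces really upgrades, via minimality, to equality rather than to some internal overlap.

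To prove item (1), suppose toward a contradiction that a proper divisor $q \lneq p$ is an $n$-ambiguity, and fix its position in $p$. Applying \cref{amb_containment} with $m = n$ and choosing $\ell$ maximal with $u_\ell \ldots u_n \geq q$, the conclusion $m \geq \ell + n$ forces $\ell = 0$; then maximality of $\ell$ means $q$ does not divide $u_1 \ldots u_n$, and since $u_0$ is a single arrow the only way this can occur is $\suf_p(q) = 1$, that is, $q$ is a suffix of $p$. The rigidity claim now yields $q = p$, contradicting properness, which proves (1). Item (2) for left decompositions is the rigidity claim applied to $q = p$, and both statements for right ambiguities follow by the symmetric argument using the analogous form of \cref{amb_containment}.
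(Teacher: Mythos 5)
Your proof is correct. Since the paper prints this corollary with a \qed{} and no argument---it is meant to be an immediate consequence of \cref{amb_containment}---the right comparison is with the derivation that lemma is supposed to supply, and your route is close to it but not identical. The rigidity induction you prove (pieces agree one at a time because $u_iu_{i+1}$ and $u'_iu'_{i+1}$ are left-aligned, both lie in $I$, and neither has a proper suffix in $I$) is essentially the induction the paper already runs inside the proof of the ``Moreover'' clause of \cref{amb_containment}; you re-prove it in piecewise form and for both parities rather than only for $n$ odd, which is what makes your item (2) and your even-$n$ case of item (1) go through. An alternative that stays entirely within the \emph{stated} conclusions of \cref{amb_containment} is: for item (2), apply the comparison \cref{amb_containment:comparison} twice with the two decompositions of $p$ in swapped roles (here $\ell=0$ is forced by length), which gives $u_i \geq u'_i$ and $u'_i \geq u_i$ for $i$ even and the reverse inequalities for $i$ odd, hence equality of all pieces; for item (1), argue as you do with the left decompositions to conclude that $q$ is a suffix of $p$, and then dually with the right decompositions (the ``analogous statement'' in the lemma) to conclude that $q$ is also a prefix of $p$ at the same fixed position, whence $q=p$. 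Your version buys self-containedness---everything reduces directly to the minimality condition in \cref{ambiguity}---at the cost of repeating an induction already present in the paper; the lemma-only version is shorter, which is presumably why the paper felt entitled to omit the proof. Your intermediate steps (that $m \geq \ell + n$ forces $\ell = 0$, and that maximality of $\ell$ together with $u_0$ being a single arrow forces $q$ to be a suffix) are both sound.
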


Let $p$ be a left $n$-ambiguity with decomposition $p = u_0 \ldots u_n$. The suffix
\begin{equation*}
\ambsuf{m}{p} \colonequals u_0 \ldots u_m
\end{equation*}
is, with this decomposition, a left $m$-ambiguity for every $-1 \leq m \leq n$. 

Analogously, if $p$ is a right $n$-ambiguity with decomposition $p = v_n \ldots v_0$ we set
\begin{equation*}
\ambpre{m}{p} \colonequals v_m \ldots v_0\,.
\end{equation*}

By \cref{compare_left_right}, any $n$-ambiguity $p$ has a decomposition
\begin{equation} \label{amb_split_suf_pre}
p = \ambsuf{j}{p} b \ambpre{i}{p}
\end{equation}
for $i+j=n-1$ and some $b \in B$ with $b < u_{j+1}$ and $b < v_{i+1}$. In general the remainder $b$ in \cref{amb_split_suf_pre} need not be trivial, as seen in the following example.

\begin{example}
In \cref{counterex-2-amb-I}, the 3-ambiguity $p = \gamma \beta \alpha \delta \gamma \beta \alpha$ has the decompositions as left and right ambiguity
\begin{equation*}
\gamma | \beta \alpha | \delta \gamma | \beta \alpha \quad \text{and}\quad \gamma \beta | \alpha \delta | \gamma \beta | \alpha\,,
\end{equation*}
respectively. Then, we have $\ambsuf{1}{p} = \gamma \beta \alpha$ and $\ambpre{1}{p} = \gamma \delta \alpha$, and the remainder is $b = \alpha$. In this example, for any $n$-ambiguity and $i+j = n-1$, the remainder is a path of length 1. 
\end{example}

\section{Bardzell's resolution}

Let $A = \kk Q/I$ be a monomial algebra and $\Gamma_n$ the set of $n$-ambiguities. We set $E \colonequals \kk Q_0 \cong \kk \Gamma_{-1}$ and $\env{A} \colonequals A \otimes_k \op{A}$. In this section we describe a minimal resolution of $A$ over $\env{A}$ due to Bardzell \cite{Bardzell:1997} that uses $n$-ambiguities as a basis in homological degree $n+1$. Since this resolution was first introduced, the notation has evolved and many arguments in the original paper can be made more concrete. In the sequel, we make use of some of these technical results. For this reason and the convenience of the reader, we will give many of the details. 

We start with a description of the $(n-1)$-ambiguities contained in an $n$-ambiguity. This depends on the parity of $n$. 

\begin{notation}
Let $p$ be an $n$-ambiguity, i.e. $p\in \Gamma_n$. We define
\begin{equation*}
\Sub(p) \colonequals \set{q \in \Gamma_{n-1}}{q \leq p}\,.
\end{equation*}
For every $q \in \Sub(p)$ we have $p = \suf(q) q \pre(q)$. Clearly $\suf(q), \pre(q) \notin I$. By \cref{amb_uniqueness}, we have two distinct elements
\begin{equation*}
\left\{\ambsuf{n-1}{p},\ambpre{n-1}{p}\right\} \subseteq \Sub(p)\,.
\end{equation*}
\end{notation}

We will see that the description of $\Sub(p)$ depends on the parity of $n$. But first we need a technical \namecref{amb_odd_overlap_lift}.

\begin{lemma} \label{amb_odd_overlap_lift}
Let $n$ be an odd integer and $q_1 \neq q_2$ be $n$-ambiguities with {$q_1 a = b q_2$}. If $a \notin I$ and $b \notin I$, then there exist $(n+1)$-ambiguities $p_1$ and $p_2$ such that $p_1,p_2 \leq q_1 a = b q_2$ and $\ambsuf{n}{p_1} = q_1$ and $\ambpre{n}{p_2} = q_2$.
\end{lemma}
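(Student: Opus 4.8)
The plan is to recast the conclusion as the existence of one extra piece on each side and then reduce everything to a single divisibility statement in $I$. Write $w \colonequals q_1 a = b q_2$. Since $a$ sits on the source side, $q_1$ is a suffix of $w$ and $q_2$ is a prefix of $w$. First I would record that $a,b$ are nontrivial: if $a$ were trivial then $q_2 \le w = q_1$ would be an $n$-ambiguity dividing the $n$-ambiguity $q_1$, forcing $q_1 = q_2$ by \cref{amb_uniqueness}, against the hypothesis; symmetrically for $b$. The same monomiality argument gives $q_1 \not\le b$ and $q_2 \not\le a$ (otherwise $b$ or $a$ would lie in $I$), so $q_1$ and $q_2$ overlap properly in $w$. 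Writing $q_1 = u_0 \ldots u_n$ as a left ambiguity, any $(n+1)$-ambiguity $p_1$ with $\ambsuf{n}{p_1} = q_1$ must be $p_1 = u_0 \ldots u_n u_{n+1}$, and $p_1 \le w$ exactly when $u_{n+1}$ is a suffix of $a$. So constructing $p_1$ amounts to producing $u_{n+1} \le a$ with $u_n u_{n+1} \in I$ having no proper suffix in $I$. By left–right duality, the construction of $p_2$ from $q_2 = v_n \ldots v_0$ is symmetric and reduces to producing $v_{n+1} \le b$ with $v_{n+1} v_n \in I$ minimal, so I only treat $p_1$.

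The crux is the claim $u_n a \in I$. Granting it, I take $u_{n+1}$ to be the shortest suffix of $a$ with $u_n u_{n+1} \in I$; such a suffix exists because $u_n a \in I$ while $u_n \in \cB$ and $a \notin I$, so the offending generator must straddle the junction between $u_n$ and $a$. Minimality then forces that no proper suffix of $u_n u_{n+1}$ lies in $I$, and $u_{n+1} \le a \notin I$ gives $u_{n+1} \in \cB$; hence $p_1 = u_0 \ldots u_n u_{n+1}$ is a genuine left $(n+1)$-ambiguity dividing $w$ with $\ambsuf{n}{p_1} = q_1$, as required.

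To prove $u_n a \in I$, I compare the two prefixes $u_n a$ and $q_2$ of $w$, which are comparable. If $q_2 \le u_n a$, then $u_n a$ contains the relation $q_2$ and we are done. In the remaining case $u_n a \lneq q_2$, let $v_1 v_0 = \ambpre{1}{q_2}$ be the source-most minimal relation of $q_2$. Since $a \notin I$ we cannot have $v_1 v_0 \le a$, so $a \lneq v_1 v_0$; and $v_1 v_0 \le u_n a$ would again give $u_n a \in I$. It therefore suffices to exclude the configuration $u_n a \lneq v_1 v_0$. Using that $v_1 v_0$ is a minimal generator of $I$ (hence has no proper divisor in $I$) while $u_{n-1} u_n \in I$, the reverse comparison $u_{n-1} u_n a \le v_1 v_0$ is impossible, so $a \lneq u_n a \lneq v_1 v_0 \lneq u_{n-1} u_n a$ as prefixes of $w$; in particular the source-side relations $v_1 v_0$ of $q_2$ and $u_{n-1} u_n$ of $q_1$ are interlocked inside the common overlap of $q_1$ and $q_2$.

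The main obstacle is to turn this interlocking into a contradiction, and this is exactly where the parity hypothesis enters. The plan is to apply the right-ambiguity form of \cref{amb_containment} with $p = q_2$ and $q$ the largest prefix-ambiguity $\ambpre{k}{q_1}$ of $q_1$ that still divides $q_2$: matching the source-side pieces of $q_1$ against those of $q_2$ yields the alternating chain of comparisons \eqref{amb_containment:comparison}, in which the inequality flips direction with each successive piece. Because $n$ is odd, the alternation runs through an odd number of pieces and terminates with the comparison oriented so as to force $v_1 v_0 \le u_n a$, contradicting $u_n a \lneq v_1 v_0$; for even $n$ the chain would close the other way, which is precisely why the statement is restricted to odd $n$. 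Getting this parity bookkeeping right—aligning the index $\ell$ of \cref{amb_containment} with the boundary of the overlap and verifying that the terminal inequality is the one we need—is the step I expect to demand the most care. Once $u_n a \in I$ is established, the construction of $p_1$ above together with its dual for $p_2$ finishes the proof.
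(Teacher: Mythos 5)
Your reduction is sound and matches the paper: once the key claim $u_n a \in I$ is established, taking $u_{n+1}$ to be the shortest suffix of $a$ with $u_n u_{n+1} \in I$ yields $p_1$ (note that proper suffixes of $u_n u_{n+1}$ inside $u_n$ are harmless since $u_n \in \cB$), and $p_2$ is dual; your disposal of the easy comparisons ($q_2 \leq u_n a$, resp.\ $v_1 v_0 \leq u_n a$) is also correct. The gap is exactly the step you flag as delicate: excluding the interlocked configuration $u_n a \lneq v_1 v_0$. What you give there is a plan, not a proof, and the plan points the wrong way. Anchoring \cref{amb_containment} at the source end, with $p = q_2$ and $q = \ambpre{k}{q_1}$, produces comparisons between the pieces $v_{1+i}$ of $q_2$ and the pieces $\hat{v}_i$ of the \emph{right} decomposition of $q_1$, starting with $v_1 \geq \hat{v}_0$ and moving toward the target end of the overlap, terminating at the maximal index $k$. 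The first comparison is perfectly consistent with the bad configuration (there $v_1$ even contains all of $u_n$), none of the comparisons is a statement about $v_1 v_0$ versus $u_n a$, and the orientation of the terminal comparison is governed by the parity of $k$ --- which depends on the geometry of the overlap, not on the parity of $n$. There is also a decomposition mismatch: your chain controls the $\hat{v}_i$, while your goal involves the left-decomposition piece $u_n$. So this chain cannot ``terminate so as to force $v_1 v_0 \leq u_n a$''; the argument as sketched does not close.

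The paper proves precisely this point by running the chain in the opposite direction, and that is where the full strength of $b \notin I$ enters (you use $b \notin I$ only to get a nonempty overlap). Writing $q_2 = u'_0 \ldots u'_n$ as a left ambiguity: since $b$ and $u_0 u_1$ are comparable suffixes of $w$ and $b \notin I$ while $u_0 u_1 \in I$, one gets $b \lneq u_0 u_1$, hence the \emph{target-end} anchor $u'_0 \leq u_1$; the alternating comparisons of \cref{amb_containment} then propagate toward the source, and oddness of $n$ makes the last one come out as $u_n \geq u'_{n-1}$ (the relevant index $n-1$ is even), whence $u'_{n-1} u'_n \leq u_n a$ and $u_n a \in I$ directly, with no case analysis at all. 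To repair your proof you would either have to reproduce this target-anchored chain inside your Case~B, or find a genuinely different argument that the interlocked configuration is impossible; as written, the decisive step is missing.
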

\begin{proof}
We assume $a \notin I$ and $b \notin I$. Let 
\begin{equation*}
q_1 = u_0 \ldots u_n \quad \text{and}\quad q_2 = u'_0 \ldots u'_n
\end{equation*}
be decompositions as left $n$-ambiguities. This setup is illustrated in \cref{amb_odd_overlap_lift:proof}.

\begin{figure}[h]
\begin{equation*}
\begin{tikzcd}[row sep=tiny,column sep=small]
\bullet \ar[rr,"a"] \&\& \bullet \ar[rrrr,"{u_n}"] \&\&\&\& \bullet \& \ldots \& \bullet \ar[rrr,"u_1"] \&\&\& \bullet \ar[r,"u_0"] \& \bullet \\
\bullet \ar[rrr,"{u'_n}"] \&\&\& \bullet \ar[rr,"{u'_{n-1}}"] \&\& \bullet \&\& \ldots \&\& \bullet \ar[r,"{u'_0}"] \& \bullet \ar[rr,"b"] \&\& \bullet
\end{tikzcd}
\end{equation*}
\caption{Graphical depiction of the setup of \cref{amb_odd_overlap_lift}.} \label{amb_odd_overlap_lift:proof}
\end{figure}
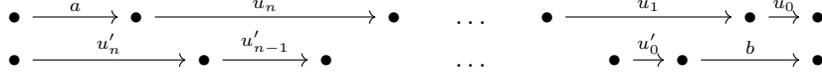

 Since $b \notin I$ and $q_1 \neq q_2$, we have $u_1 \geq u'_0$. By \cref{amb_containment}, $u_n \geq u'_{n-1}$, and so, $u_n a \geq u'_{n-1} u'_n$. We can choose $u_{n+1} \leq a$ minimally such that $u_{n+1} u_n \in I$. Since $a \notin I$, we have $u_{n+1} \notin I$ and $p_1 \colonequals u_0 \ldots u_{n+1} \leq q_1 a$ a $(n+1)$-ambiguity. 

The $(n+1)$-ambiguity $p_2$ exists by the same argument for decompositions of right ambiguities.
\end{proof}

Now we show \cite[Lemma~3.3]{Bardzell:1997}.

\begin{lemma} \label{sub_amb_even}
Let $n$ be an even integer and $p$ an $n$-ambiguity. Then
\begin{equation*}
\Sub(p) = \left\{\ambsuf{n-1}{p},\ambpre{n-1}{p}\right\}\,.
\end{equation*}
\end{lemma}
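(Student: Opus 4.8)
The inclusion $\{\ambsuf{n-1}{p}, \ambpre{n-1}{p}\} \subseteq \Sub(p)$ and the distinctness of these two elements are already recorded just before the statement, so the real content is the reverse inclusion $\Sub(p) \subseteq \{\ambsuf{n-1}{p}, \ambpre{n-1}{p}\}$. The plan is to take an arbitrary $q \in \Gamma_{n-1}$ with $q \leq p$, fix left decompositions $p = u_0 \ldots u_n$ and $q = u'_0 \ldots u'_{n-1}$, and apply \cref{amb_containment} with $m = n$ to the pair $p \geq q$. Since $q$ has odd index $n-1$, the lemma yields $n \geq \ell + (n-1)$, hence $\ell \in \{0,1\}$. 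This dichotomy is exactly what I expect to single out the two candidate divisors: $\ell = 0$ should produce the suffix $\ambsuf{n-1}{p}$, and $\ell = 1$ the prefix $\ambpre{n-1}{p}$.

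For $\ell = 0$ I would read off the comparison \cref{amb_containment:comparison} at $i = 0$, namely $u_0 \geq u'_0$. Both $u_0$ and $u'_0$ are arrows, so they coincide; because $u_0$ has length one, this forces the target of $q$ to sit at the target of $p$, i.e.\ $\suf(q) = 1$ and $q$ is a suffix. The minimality built into \cref{ambiguity} then propagates the equality: $u_{i+1}$ and $u'_{i+1}$ are both the minimal path completing $u_i = u'_i$ to an element of $I$ with no proper suffix in $I$, so $u'_i = u_i$ for all $i$ by induction, giving $q = u_0 \ldots u_{n-1} = \ambsuf{n-1}{p}$.

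For $\ell = 1$ I would instead use the comparison at the top index $i = n-1$, which is odd, giving $u_n \leq u'_{n-1}$. Here $u_n$ is the source-most piece of $p$ and $u'_{n-1}$ the source-most piece of $q$. Since $u_n$ occupies the very source end of $p$ while $u'_{n-1}$ begins at the source end of $q \leq p$, the containment $u_n \leq u'_{n-1}$ can only hold if these source ends coincide, that is $\pre(q) = 1$, so $q$ is in fact a \emph{prefix} of $p$. Passing to the right decompositions $p = v_n \ldots v_0$ and $q = v'_{n-1}\ldots v'_0$ and running the arrow-plus-minimality argument of the previous case from the source end (via the right-hand analogue of \cref{amb_containment}) then identifies $q = \ambpre{n-1}{p}$.

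I expect the case $\ell = 1$ to be the main obstacle: the delicate point is to upgrade the purely combinatorial comparison $u_n \leq u'_{n-1}$ to the positional assertion that $q$ is an honest prefix, which requires tracking where each piece sits inside $p$ rather than recording only abstract divisibility. The case $\ell = 0$ is comparatively painless precisely because $u_0$ is an arrow, which collapses the leftmost comparison to an equality immediately and then pins down the suffix position.
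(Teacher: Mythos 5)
Your argument is correct, but it follows a genuinely different route from the paper's. The paper's proof is an overlap--lifting argument: for $q \in \Sub(p)$ with $q \neq \ambsuf{n-1}{p}$ it writes the overlap $\ambsuf{n-1}{p}\,a = b\,q \leq p$, notes that the overhangs satisfy $a,b \notin I$, and invokes \cref{amb_odd_overlap_lift} (applicable since $n-1$ is odd) to produce an $n$-ambiguity $\tilde{p} \leq bq \leq p$ with $\ambpre{n-1}{\tilde{p}} = q$; then \cref{amb_uniqueness} forces $\tilde{p} = p$, so $q = \ambpre{n-1}{p}$. You bypass the lifting lemma entirely and apply \cref{amb_containment} directly to $q \leq p$: the bound $m \geq \ell + n$ there gives $\ell \in \{0,1\}$, and each value is resolved positionally --- for $\ell = 0$ the arrow comparison $u_0 \geq u'_0$ pins $q$ as a suffix, and your minimality induction (which is precisely the paper's proof of the ``Moreover'' clause of \cref{amb_containment}, so you could simply cite that clause) yields $q = \ambsuf{n-1}{p}$; for $\ell = 1$ the comparison $u_n \leq u'_{n-1}$ pins $q$ as a prefix, since $u_n$ abuts the source of $p$, and the dual argument yields $q = \ambpre{n-1}{p}$. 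The step you flag as the main obstacle is in fact immediate, because the relation $\leq$ in this paper is positional by definition, so containment of the source-most pieces already identifies the source endpoints. As for what each approach buys: yours is more self-contained, resting only on \cref{amb_containment} (the same lemma on which \cref{amb_odd_overlap_lift} itself rests) and making the suffix/prefix dichotomy explicit; the paper's is shorter given \cref{amb_odd_overlap_lift}, which costs nothing extra there since that lemma is reused for \cref{sub_amb_overlap} and in the proof of \cref{diagonal}. Both proofs, yours and the paper's, are written for $n \geq 2$; the case $n = 0$ is degenerate but trivial, since the only trivial paths dividing an arrow are its two endpoints.
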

\begin{proof}
Let $q \in \Sub(p)$ with $q \neq \ambsuf{n-1}{p}$. We can write
\begin{equation*}
\ambsuf{n-1}{p} a = b q \leq p\,.
\end{equation*}
Then $a \leq \pre(\ambsuf{n-1}{p}) \notin I$ and $b \leq \suf(\ambpre{n-1}{p}) \notin I$. So, we can apply \cref{amb_odd_overlap_lift} to obtain an $n$-ambiguity $\tilde{p}$ with $\tilde{p} \leq bq \leq p$. Hence, $\tilde{p} = p$ and $q = \ambpre{n-1}{p}$. 
\end{proof}

The following \namecref{sub_amb_overlap} recovers \cite[Lemma~4.7]{Redondo/Roman:2018a}. 

\begin{lemma} \label{sub_amb_overlap}
Let $n$ be an even integer and $p$ a path in $Q$. We consider the ordered set
\begin{equation*}
\set{q \in \Gamma_n}{q \leq p} = 
\{q_1, \ldots, q_N\}
\end{equation*}
where $p = b_i q_i a_i$ and $a_i \lneq a_{i+1}$ for any $1 \leq i \leq N-1$. Then $\ambsuf{n-1}{q_i} = \ambpre{n-1}{q_{i+1}}$ for all $1 \leq i \leq N-1$. 
\end{lemma}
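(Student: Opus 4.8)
The plan is to reduce the statement to a single containment of $(n-1)$-ambiguities and then feed that into \cref{sub_amb_even}. Fix $1 \leq i \leq N-1$ and set $r \colonequals \ambsuf{n-1}{q_i}$ and $r' \colonequals \ambpre{n-1}{q_{i+1}}$; both are $(n-1)$-ambiguities, and since $n$ is even, $n-1$ is odd, so \cref{amb_odd_overlap_lift} is available. First I would record the elementary positional fact that, because $a_i \lneq a_{i+1}$, the source of $q_{i+1}$, and hence the source of its prefix ambiguity $r'$, lies strictly beyond the source of $q_i$.

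The core of the argument is the claim that $r'$ is a divisor of $q_i$, i.e. $r' \leq q_i$. Granting this, $r'$ is an $(n-1)$-ambiguity dividing the even ambiguity $q_i$, so \cref{sub_amb_even} forces $r' \in \{\ambsuf{n-1}{q_i},\ambpre{n-1}{q_i}\}$. But $\ambpre{n-1}{q_i}$ shares its source with $q_i$, whereas $r'$ starts strictly later by the observation above; hence $r' \neq \ambpre{n-1}{q_i}$, and therefore $r' = \ambsuf{n-1}{q_i} = r$, which is exactly the asserted equality. So the entire lemma collapses to proving the containment.

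To establish $r' \leq q_i$ I would argue by contradiction against adjacency, namely the fact that no $n$-ambiguity $q \leq p$ has $a_i \lneq \pre_p(q) \lneq a_{i+1}$. If $r'$ were not a divisor of $q_i$, then, since it begins inside $q_i$ but would then have to end past the target of $q_i$, the $(n-1)$-ambiguities $r$ and $r'$ overlap inside $p$ with both remainders avoiding $I$. Feeding $r$ and $r'$ into \cref{amb_odd_overlap_lift} produces an $n$-ambiguity inside the overlap $r a = b r' \leq p$, and by tracking its source with \cref{amb_containment} I would show its prefix lies strictly between $a_i$ and $a_{i+1}$, contradicting adjacency. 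The right-ambiguity version is symmetric.

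The main obstacle is precisely this last step: ruling out that $r'$ overruns $q_i$ and, more delicately, confirming that adjacent ambiguities genuinely overlap so that \cref{amb_odd_overlap_lift} can be applied at all. This is where the minimality condition in \cref{ambiguity}, propagated through the piecewise comparisons of \cref{amb_containment}, carries the weight: it locks the pieces of $q_i$ and $q_{i+1}$ against one another along the overlap and guarantees that the remainders $a,b$ handed to the lifting lemma stay outside $I$. Verifying these positional inequalities carefully, rather than the formal invocation of \cref{amb_odd_overlap_lift}, is the technical heart of the proof.
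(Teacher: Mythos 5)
Your skeleton is the same as the paper's: either one of the containments $\ambsuf{n-1}{q_i} \leq q_{i+1}$ or $\ambpre{n-1}{q_{i+1}} \leq q_i$ holds, and then \cref{sub_amb_even} together with your positional observation (adjacent ambiguities have distinct sources and, by \cref{amb_uniqueness}, distinct targets) yields the equality; or neither holds, and one feeds the two $(n-1)$-ambiguities into \cref{amb_odd_overlap_lift} to produce an $n$-ambiguity sitting strictly between $q_i$ and $q_{i+1}$, contradicting adjacency. Your reduction of the whole lemma to the single claim $\ambpre{n-1}{q_{i+1}} \leq q_i$ is a sound reorganization of that dichotomy.

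The gap is exactly the step you name and then defer. In the second case you assert that $\ambpre{n-1}{q_{i+1}}$ ``begins inside $q_i$'' and that the two remainders handed to \cref{amb_odd_overlap_lift} avoid $I$, promising that minimality propagated through \cref{amb_containment} will verify this. No such verification is possible at the stated level of generality, because for an arbitrary path $p$ consecutive $n$-ambiguities need not overlap at all. Concretely: take the quiver with vertices $1,\dots,8$ and arrows $\alpha_i \colon i \to i+1$, let $I = (\alpha_2\alpha_1, \alpha_3\alpha_2, \alpha_6\alpha_5, \alpha_7\alpha_6)$, and let $p = \alpha_7\alpha_6\alpha_5\alpha_4\alpha_3\alpha_2\alpha_1$. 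The $2$-ambiguities dividing $p$ are $q_1 = \alpha_3\alpha_2\alpha_1$ and $q_2 = \alpha_7\alpha_6\alpha_5$; they are disjoint, both paths joining $\ambsuf{1}{q_1} = \alpha_3\alpha_2$ to $\ambpre{1}{q_2} = \alpha_6\alpha_5$, namely $\alpha_4\alpha_3\alpha_2$ and $\alpha_6\alpha_5\alpha_4$, lie in $I$, and the conclusion $\ambsuf{1}{q_1} = \ambpre{1}{q_2}$ is simply false. So the deferred step is not a routine technicality: it fails, and with it the statement, for a general path $p$. (To be fair, the paper's own proof shares this lacuna --- it writes $b(\ambsuf{n-1}{q_i}) = (\ambpre{n-1}{q_{i+1}})a$ ``with $a,b \notin I$'' without justification; the lemma is only ever applied when $p$ is an ambiguity, in the proof of \cref{bardzell_resn}, or the tail $r$ of one with $p = \ambsuf{\ell}{p}r$, in the proof of \cref{diagonal}, and in those situations the relations are densely packed so the needed overlap does hold --- but that is precisely what must be proven before \cref{amb_odd_overlap_lift} can be invoked.) A complete argument therefore has to either restrict the class of paths $p$ or establish the overlap property for the relevant $p$; your proposal, having correctly identified this as the technical heart, leaves exactly that unproven.
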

\begin{proof} 
If $\ambsuf{n-1}{q_i} \leq q_{i+1}$ or $\ambpre{n-1}{q_{i+1}} \leq q_i$ the claim holds by \cref{sub_amb_even}. On the other hand, if $\ambsuf{n-1}{q_i} \not\leq q_{i+1}$ and $\ambpre{n-1}{q_{i+1}} \not\leq q_i$, then we have
\begin{equation*}
b (\ambsuf{n-1}{q_i}) = (\ambpre{n-1}{q_{i+1}}) a \quad \text{with } a,b \notin I\,.
\end{equation*}
By \cref{amb_odd_overlap_lift} there exists an $n$-ambiguity $q \leq b (\ambsuf{n-1}{q_i})$. By assumption, we have $q = q_i$ or $q = q_{i+1}$. This is a contradiction.
\end{proof}

In \cref{amb_odd_overlap_lift}, we utilized that, under certain conditions, we can extend an $n$-ambiguity to an $(n+1)$-ambiguity. For such an extension to exist, it is crucial that one can choose $u_{n+1} \notin I$. The following example illustrates that this is not always possible.

\begin{example} \label{example_cone}
We consider the quiver
\begin{equation*}
\begin{tikzcd}[column sep=small]
1 \ar[rr,"\alpha" swap] \ar[dr,"{\beta}" swap] \&\& 2 \ar[ll,"\zeta" swap,bend right=30] \\
\& 3 \ar[ur,"\gamma" swap]
\end{tikzcd}
\quad \text{with} \quad I = (\beta\zeta,\zeta\gamma,\alpha\zeta\alpha,\zeta\alpha\zeta)\,.
\end{equation*}
Then, we have 2- and 4-ambiguities
\begin{equation*}
q = \zeta | \alpha\zeta | \alpha \leq \alpha | \zeta\alpha | \zeta | \alpha\zeta | \gamma = p,
\end{equation*}
respectively. If $r \leq \zeta\gamma$ such that $\alpha r \in I$, then $r = \zeta\gamma \in I$. Hence, $q$ cannot be extended by a prefix to a 3-ambiguity. 
\end{example}

We can now describe the Bardzell resolution.

\begin{theorem} \label{bardzell_resn}
Let $\kk$ be a field and $A$ a monomial algebra over $\kk$. The complex,
\begin{gather*}
\Bardzell(A)_{n+1} \colonequals \begin{cases}
0 & \text{if } n < -1 \\
A \otimes_E \kk \Gamma_n \otimes_E A & \text{if } n \geq -1
\end{cases}
\end{gather*}
with differential
\begin{equation} \label{bardzell_resn:diff}
\begin{gathered}
d_{n+1}(1 \otimes p \otimes 1) \colonequals \begin{cases}
\displaystyle\sum_{q \in \Sub(p)} \suf(q) \otimes q \otimes \pre(q) & \text{if } n \text{ odd} \\
\begin{aligned} &\suf(\ambpre{n-1}{p}) \otimes \ambpre{n-1}{p} \otimes 1 \\
&\quad - 1 \otimes \ambsuf{n-1}{p} \otimes \pre(\ambsuf{n-1}{p}) \end{aligned} & \text{if } n \text{ even}
\end{cases}
\end{gathered}
\end{equation}
for $p \in \Gamma_n$, is a resolution of $A$ over $\env{A}$ with quasi-isomorphism
\begin{equation*}
\epsilon \colon \Bardzell(A) \to A \,,\quad b \otimes p \otimes a \mapsto \begin{cases} ba & \text{if } p \in \Gamma_{-1} \text{ and } t(a) = p = s(b) \\ 0 & \text{else}\,. \end{cases}
\end{equation*}
\end{theorem}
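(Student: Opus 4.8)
The plan is to establish the three defining properties of a projective resolution in turn: that each term is $\env{A}$-projective, that \cref{bardzell_resn:diff} squares to zero so that $\Bardzell(A)$ is a complex, and that the augmented complex $\Bardzell(A) \xrightarrow{\epsilon} A$ is exact. The differentials are defined on the generators $1 \otimes p \otimes 1$ and extended $\env{A}$-linearly, so one should first record that they are well defined as bimodule maps.

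Projectivity is immediate from the semisimplicity of $E = \kk Q_0$. The $E$-bimodule $\kk \Gamma_n$ splits as $\bigoplus_{p \in \Gamma_n} \kk p$, where the left and right $E$-actions on $\kk p$ factor through $t(p)$ and $s(p)$ respectively. Hence $\Bardzell(A)_{n+1} \cong \bigoplus_{p \in \Gamma_n} A e_{t(p)} \otimes_\kk e_{s(p)} A$, and each summand is a direct summand of $\env{A} \cong \bigoplus_{e,e'} A e \otimes_\kk e' A$, so it is $\env{A}$-projective.

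For $d^2 = 0$ I would split on the parity of $n$ and expand $d_n d_{n+1}(1 \otimes p \otimes 1)$. When $n$ is odd one composes the sum-over-$\Sub(p)$ differential with the two-term even differential; when $n$ is even one does the reverse. In both cases the resulting tensors have $(n-2)$-ambiguities as middle factors, and they organize into pairs with opposite signs. The cancellation is governed precisely by the lemmas already proved: \cref{sub_amb_even} identifies $\Sub(q) = \{\ambsuf{n-2}{q}, \ambpre{n-2}{q}\}$ for the even sub-ambiguities that arise, while \cref{sub_amb_overlap} matches $\ambsuf{n-2}{}$ of one sub-ambiguity with $\ambpre{n-2}{}$ of the next along the ordered overlap, producing the cancelling pairs. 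The only real care here is bookkeeping of the prefixes and suffixes via $p = \suf(q)\, q\, \pre(q)$ and their compatibility under concatenation.

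Exactness is the crux, and it is where essentially all the work lies. Since exactness is a property of the underlying complex of $\kk$-vector spaces, I would restrict scalars to the left $A$-action, for which each $\Bardzell(A)_{n+1}$ is free on $\set{1 \otimes p \otimes a}{p \in \Gamma_n,\ a \in \cB,\ s(p) = t(a)}$, and construct a $\kk$-linear contracting homotopy $s_\bullet$ with $\epsilon s_{-1} = \id_A$ and $d s + s d = \id$ in positive degrees. The homotopy is a combinatorial normalization: on a basis element $1 \otimes p \otimes a$ it absorbs the shortest prefix of $a$ that extends $p$ to an $(n+1)$-ambiguity, and returns $0$ when no such extension exists. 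The existence, minimality, and uniqueness of this extension are exactly what \cref{amb_odd_overlap_lift}, \cref{amb_containment}, and \cref{amb_uniqueness} control. The demanding step is verifying that $s$ is well defined and satisfies $d s + s d = \id$: for each generator one must check that the ambiguity-extension performed by $s$ either cancels a term of $d$ or supplies the missing identity contribution, which is again organized by the overlap combinatorics. This is the discrete-Morse/homological-perturbation argument of \cite{Skoeldberg:2008}; an alternative is to transport a contraction from the reduced bar resolution along explicit comparison maps. Finally, one observes minimality: every entry $\suf(q)$, $\pre(q)$, $\suf(\ambpre{n-1}{p})$, $\pre(\ambsuf{n-1}{p})$ in \cref{bardzell_resn:diff} is a path of positive length (for instance $\pre(\ambsuf{n-1}{p}) = u_n \neq 1$), so the differential lands in the arrow ideal times $\Bardzell(A)$, confirming that $\Bardzell(A)$ is the minimal projective resolution of $A$ over $\env{A}$.
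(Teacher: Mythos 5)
Your overall architecture---projectivity from the semisimplicity of $E$, then $d^2=0$ split by parity using the ambiguity lemmas, then exactness via a one-sided contracting homotopy in the spirit of \cite{Skoeldberg:2008}---is the same as the paper's, which reduces exactness to Sk\"oldberg's homotopy identity $d\sigma + \sigma d = \id + \iota\epsilon$ for right $A$-complexes. The genuine gap is the homotopy you actually write down. You take a \emph{single} term, absorbing ``the shortest prefix of $a$ that extends $p$ to an $(n+1)$-ambiguity,'' whereas the correct homotopy is a \emph{sum over all} $(n+1)$-ambiguities contained in the merged path, in all positions: in the paper's right-module convention, $\sigma(b \otimes p \otimes 1) = \sum_{bp=eqc,\ q \in \Gamma_{n+1}} e \otimes q \otimes c$. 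This is not cosmetic. Take $A=\kk[x]/(x^3)$, a monomial algebra on one vertex and one loop, so $\Gamma_{-1}=\{e\}$, $\Gamma_0=\{x\}$, $\Gamma_1=\{x^3\}$, and consider $\xi = x^2 \otimes x \otimes 1$ in homological degree $1$. Then $d\xi = x^3 \otimes e \otimes 1 - x^2 \otimes e \otimes x = -x^2\otimes e \otimes x$ since $x^3=0$ in $A$. Your prescription gives $s(\xi) = 1 \otimes x^3 \otimes 1$ and, keeping only the shortest extension of $e$ inside $x^2$, $s(x^2 \otimes e \otimes 1) = x \otimes x \otimes 1$, hence
\begin{equation*}
(ds+sd)(\xi) = \left(x^2\otimes x\otimes 1 + x\otimes x\otimes x + 1\otimes x\otimes x^2\right) - x\otimes x\otimes x = \xi + 1\otimes x\otimes x^2 \neq \xi\,.
\end{equation*}
(If your ``shortest'' absorbs from the other end, the surviving junk term is $x\otimes x\otimes x$ instead.) The identity closes only if $s(x^2\otimes e\otimes 1)$ contains \emph{both} positioned copies of the $0$-ambiguity $x$ in $x^2$, i.e.\ equals $x\otimes x\otimes 1 + 1\otimes x\otimes x$, which is exactly what the sum formula provides. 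The same failure occurs in your left-module convention and already for triangular algebras (e.g.\ the $A_4$ quiver with a single relation of length $3$), so the minimal-extension normalization is irreparably wrong, not merely unverified---and exactness, which you rightly identify as the crux, is therefore not established.

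There is also a gap in your $d^2=0$ step: the terms do \emph{not} all ``organize into pairs with opposite signs.'' In the odd case, after the telescoping cancellation coming from \cref{sub_amb_overlap}, two boundary terms remain unpaired; they are zero because their outer coefficients lie in $I$ (e.g.\ $\suf(\ambpre{n-1}{p})\suf(\ambpre{n-2}{p}) = v_n v_{n-1} \in I$ in the right-ambiguity decomposition of $p$). In the even case, the terms indexed by $q \in \Sub(\ambpre{n-1}{p}) \setminus \Sub(\ambsuf{n-1}{p})$ (and symmetrically) are matched by nothing; they vanish because $\suf(\ambpre{n-1}{p})\suf(q) \in I$, which is precisely \cref{amb_even_pre_suf_zero}---a lemma your sketch never invokes. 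So vanishing of coefficients modulo $I$ is an indispensable second mechanism alongside sign cancellation, and without it the computation does not close. Finally, a small inaccuracy in your closing minimality remark (which the statement does not require anyway): it is false that every outer entry of \cref{bardzell_resn:diff} has positive length---for instance $\suf_p(\ambsuf{n-1}{p}) = 1$; what is true, and suffices, is that in each term at least one of the two outer tensor factors is a path of positive length.
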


We need a few more technical results for the proof. First, we describe some constraints on the position of a smaller ambiguity in a bigger one.

\begin{lemma} \label{amb_sub_position}
Let $n$ be an integer and $i+j=n-1$. We assume $p$ is an $n$-ambiguity with a decomposition $p = b q a$ such that $q$ is an $i$-ambiguity. 
\begin{enumerate}
\item If $i$ is odd, then $a = 1$ if and only if $\ambsuf{j}{p} \leq b$. 
\item If $i$ is even, then $a \lneq \pre(\ambsuf{n-1}{p})$ if and only if $\ambsuf{j}{p} \leq b$.
\end{enumerate}
The analogous statements for $\ambpre{j}{p}$ hold as well: 
\begin{enumerate}
\item If $i$ is odd, then $b = 1$ if and only if $\ambpre{j}{p} \leq a$.
\item If $i$ is even, then $b \lneq \suf(\ambpre{n-1}{p})$ if and only if $\ambpre{j}{p} \leq a$.
\end{enumerate}
\end{lemma}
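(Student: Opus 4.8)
The four assertions fall into two dual pairs, so the plan is to prove the two statements about $\ambsuf{j}{p}$ and then deduce those about $\ambpre{j}{p}$ verbatim by passing from left to right decompositions, using the left--right symmetry \cref{compare_left_right} and the right-handed form of \cref{amb_containment}. I fix the left decompositions $p = u_0 \ldots u_n$ and $q = u'_0 \ldots u'_i$, and record that, since $p$ is the concatenation $u_0 \ldots u_n$, we have $\pre(\ambsuf{n-1}{p}) = u_n$. Applying \cref{amb_containment} to $q \leq p$ produces the maximal index $\ell$ with $u_\ell \ldots u_n \geq q$; in particular $\ell + i \leq n$.

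The first step is a dictionary turning the hypothesis on $b$ into a statement about $\ell$. Maximality of $\ell$ means $q$ divides $u_\ell \ldots u_n$ but not $u_{\ell+1} \ldots u_n$, which pins the left-hand end of $q$ inside the piece $u_\ell$; equivalently $\ambsuf{\ell-1}{p} \leq b \lneq \ambsuf{\ell}{p}$. Comparing lengths with $\ambsuf{j}{p} = u_0 \ldots u_j$ and using $j = n-1-i$ then gives, in either parity, $\ambsuf{j}{p} \leq b$ if and only if $\ell \geq j+1$, that is (since always $\ell + i \leq n$) if and only if $\ell + i = n$. It therefore remains to match the stated condition on $a$ with the equality $\ell + i = n$.

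For $i$ even I would use the two-sided estimate from \cref{amb_containment}: $q \leq u_\ell \ldots u_{\ell+i}$ while $q \not\leq u_\ell \ldots u_{\ell+i-1}$. Reading these off as intervals bounds $a$ by $u_{\ell+i+1} \ldots u_n \leq a \lneq u_{\ell+i} \ldots u_n$, and comparing lengths with $u_n = \pre(\ambsuf{n-1}{p})$ shows $a \lneq \pre(\ambsuf{n-1}{p})$ exactly when $u_{\ell+i+1}\ldots u_n$ is empty, i.e.\ when $\ell + i = n$. For $i$ odd the estimate is only one-sided, so I would treat the two implications separately. If $\ell + i = n$, then \cref{amb_containment} gives $u_{\ell+1} \ldots u_n \leq q$; this divisor reaches the right-hand end of $p$, and $q \leq p$ forces $q$ to reach it as well, so $a = 1$. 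Conversely, if $a = 1$ then $q$ is a prefix $i$-ambiguity of $p$, and for odd $i$ such a prefix is unique and equals $\ambpre{i}{p}$ by the right-handed form of the final assertion of \cref{amb_containment}; the canonical splitting \cref{amb_split_suf_pre} then writes $p$ as $\ambsuf{j}{p}$ followed by a remainder and by $\ambpre{i}{p} = q$, so $b$ contains $\ambsuf{j}{p}$, i.e.\ $\ambsuf{j}{p} \leq b$.

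The main obstacle is that \cref{amb_containment} must be used as a statement about the positions the pieces occupy in $p$, not merely about divisibility, and the parity split is unavoidable: for even $i$ the two-sided containment locates the end of $q$ directly, whereas for odd $i$ only the lower bound $u_{\ell+1}\ldots u_{\ell+i} \leq q$ is available, which is why that case must be routed through the uniqueness of odd prefix ambiguities together with the splitting \cref{amb_split_suf_pre}.
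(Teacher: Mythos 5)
Your proof is correct and takes essentially the same route as the paper's: both fix the left decompositions, take $\ell$ maximal with $u_\ell \ldots u_n \geq q$, and translate the conditions on $a$ and on $b$ into the single equality $\ell + i = n$ using \cref{amb_containment}. The only divergence is in the odd case, where the paper reads $a = 1 \iff \ell + i = n$ directly off the comparison $u_{\ell+i} \leq u'_i$ (leaving the implication $a=1 \Rightarrow \ell+i=n$ implicit), while you instead close a cycle of implications via the right-handed ``Moreover'' clause of \cref{amb_containment} and the splitting \cref{amb_split_suf_pre}; both are valid, and your variant spells out a step the paper glosses over.
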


\begin{proof}
We decompose $p$ and $q$ as left ambiguities $p = u_0 \ldots u_n$ and $q = u'_0 \ldots u'_i$, 
and let $\ell$ be maximal such that $u_\ell \ldots u_n \geq q$.

Let $i$ be odd. The maximality of $\ell$ implies that $u_0'\leq u_\ell$. Therefore, by \cref{amb_containment}, we have $u_{\ell+i} \leq u'_i$. Hence,
\begin{equation*}
a = 1 \iff \ell+i=n \iff u_{j+1} \ldots u_n \geq q \iff \ambsuf{j}{p} \leq b\,.
\end{equation*}

Let $i$ be even. Then $\pre(\ambsuf{n-1}{p}) = u_n$. By \cref{amb_containment} we have $u_{\ell+i} \geq u'_i$. Hence,
\begin{equation*}
a \lneq \pre(\ambsuf{n-1}{p}) \iff \ell+i=n \iff u_{j+1} \ldots u_n \geq q \iff \ambsuf{j}{p} \leq b\,. \qedhere
\end{equation*}
\end{proof}

\begin{lemma} \label{amb_even_pre_suf_zero}
Let $m < n$ be integers and $p$ an $n$-ambiguity. We assume $p \hat{a} = bqa$ with $q$ an $m$-ambiguity and $m$ even. Then, $b \notin I$ if and only if $q \leq \ambsuf{m+1}{p}$. In particular, $q \leq p$. 

An analogous statement holds for $\hat{b} p = bqa$ and $\ambpre{m+1}{p}$.
\end{lemma}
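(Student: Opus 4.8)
The plan is to fix left-ambiguity decompositions $p = u_0 \ldots u_n$ and $q = u'_0 \ldots u'_m$ and to translate the hypothesis $b \notin I$ into a statement about the position of the left (target) end $u'_0$ of $q$ inside $p$. The starting observation is that $b$ is a suffix of $p\hat a$; since $\hat a$ is appended at the source end, either $b$ already contains all of $p$, in which case $b \geq \ambsuf{1}{p} = u_0 u_1 \in I$ automatically, or $b$ is an honest suffix of $p$. In the latter case I would use that $\ambsuf{1}{p} = u_0 u_1$ is the shortest suffix of $p$ lying in $I$: it lies in $I$ and, by \cref{ambiguity}\,(2) with $i=0$, no proper suffix of it does, while every longer suffix of $p$ contains it as a divisor. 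Hence, for $b$ a suffix of $p$, we have $b \in I$ if and only if $b \geq \ambsuf{1}{p}$, equivalently $u'_0$ sits at or beyond the segment $u_2$.

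For the direction $q \leq \ambsuf{m+1}{p} \Rightarrow b \notin I$, I would apply \cref{amb_containment} to the pair $\ambsuf{m+1}{p} \geq q$: taking the roles of its $m$ and $n$ to be played by $m+1$ and $m$ (with $m$ even), the bound $m \geq \ell+n$ there reads $m+1 \geq \ell + m$, forcing $\ell \leq 1$, and the even-parity conclusion \cref{amb_containment}\,(1) then places the left end of $q$ inside $u_0 u_1$. Thus $b \lneq u_0 u_1$, and the first paragraph gives $b \notin I$. For the converse, starting from $b \notin I$, the characterization above confines $u'_0$ to the first two segments $u_0 u_1$, and I would propagate the alternating comparisons $u_{\ell+i} \geq u'_i$ for $i$ even and $u_{\ell+i} \leq u'_i$ for $i$ odd, exactly as in the proof of \cref{amb_containment} and using the minimality of each $u'_{i+1}$, to conclude that the occurrence of $q$ is $u_\ell \ldots u_{\ell+m}$ with $\ell \leq 1$. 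Since $m+1 \leq n$, this occurrence lies inside $\ambsuf{m+1}{p} = u_0 \ldots u_{m+1}$, yielding both $q \leq \ambsuf{m+1}{p}$ and the asserted $q \leq p$.

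The main obstacle is that $p\hat a$ is not itself an ambiguity, so \cref{amb_containment} does not apply to it directly; the content of the lemma is precisely that $b \notin I$ prevents $q$ from spilling past $p$ into $\hat a$. The decisive point is the even parity of $m$: by \cref{amb_containment}\,(1) an even-degree sub-ambiguity is aligned from the target end, so once its left end is pinned inside $u_0 u_1$ its right end reaches only up to the segment $u_{\ell+m}$ with $\ell \leq 1$, hence at most $u_{m+1}$, which still lies within $p$ because $m+1 \leq n$. Running the minimality induction therefore never forces an index exceeding $n$, and $q$ cannot reach into $\hat a$. Finally, the statement for $\hat b p = bqa$ and $\ambpre{m+1}{p}$ follows by the same argument applied to the right-ambiguity decompositions, which is legitimate by the left--right equivalence recorded after \cref{ambiguity}.
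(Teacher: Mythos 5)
Your proposal is correct and takes essentially the same route as the paper's proof: both fix the left-ambiguity decompositions $p = u_0 \ldots u_n$ and $q = u'_0 \ldots u'_m$, characterize $b \notin I$ as $b \lneq u_0 u_1$ via \cref{ambiguity}\,(2), and then run the alternating comparisons of \cref{amb_containment} (with $\ell \leq 1$ and $m$ even) to pin $q$ inside $u_0 \ldots u_{m+1}$. You are more explicit than the paper about the converse direction and about why the induction is legitimate inside $p\hat{a}$ even though it is not itself an ambiguity, but the underlying argument is the same.
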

\begin{proof}
Let $p = u_0 \ldots u_n$ and $q = u'_0 \ldots u'_m$ be decompositions as left ambiguities. We assume $b \notin I$. Then, $b < u_0 u_1$. In particular, this means $u'_0 \leq u_0 u_1$. If $u'_0 = u_0$, then $q = \ambsuf{m}{p} \leq \ambsuf{m+1}{p}$. If $u'_0 \leq u_1$, then by \cref{amb_containment} we have $q \leq u_1 \ldots u_{m+1} \leq \ambsuf{m+1}{p}$ since $m$ is even.

The converse is clear from the definition of $\ambsuf{m+1}{p}$.
\end{proof}

\begin{proof}[Proof of \cref{bardzell_resn}]
In the definition of the differential, we use implicitly that for $n$ even an $n$-ambiguity contains exactly two $(n-1)$-ambiguities; see \cref{sub_amb_even}. First, we show that $(\Bardzell(A),d)$ is a complex, that is that $d^2 = 0$. 

\begin{step}
We show that, for $n$ odd, we have $d^2(1 \otimes p \otimes 1) = 0$ when $p \in \Gamma_n$.
\end{step}
Using the definition of $d$ in \cref{bardzell_resn:diff}, we can write
\begin{align*}
d^2(1 \otimes p \otimes 1) &= \sum_{q \in \Sub(p)} \left( \suf(q) \suf(\ambpre{n-2}{q}) \otimes \ambpre{n-2}{q} \otimes \pre(q) \right. \\
&\quad\quad \left.- \suf(q) \otimes \ambsuf{n-2}{q} \otimes \pre(\ambsuf{n-2}{q}) \pre(q)\right)\,.
\end{align*}

We order $\Sub(p) = \{\ambsuf{n-1}{p} = q_1, \ldots, q_N = \ambpre{n-1}{p}\}$ as in \cref{sub_amb_overlap}. Then $\ambsuf{n-2}{q_i} = \ambpre{n-2}{{q_{i+1}}}$. Thus we obtain
\begin{align*}
&\quad d^2(1 \otimes p \otimes 1) \\
&= \suf(\ambpre{n-1}{p}) \suf(\ambpre{n-2}{p}) \otimes \ambpre{n-2}{p} \otimes 1 - 1 \otimes \ambsuf{n-2}{p} \otimes \pre(\ambsuf{n-2}{p}) \pre(\ambsuf{n-1}{p}) \\
&= 0\,,
\end{align*}
since $\suf(\ambpre{n-1}{p}) \suf(\ambpre{n-2}{p}) \in I$ and analogously for the second term.

\begin{step}
We show that, for $n$ even, we have $d^2(1 \otimes p \otimes 1) = 0$ when $p \in \Gamma_n$.
\end{step}
We can write
\begin{align*}
d^2(1 \otimes p \otimes 1) &= \sum_{q \in \Sub(\ambpre{n-1}{p})} \suf(\ambpre{n-1}{p}) \suf(q) \otimes q \otimes \pre(q) \\
&\quad - \sum_{q \in \Sub(\ambsuf{n-1}{p})} \suf(q) \otimes q \otimes \pre(q) \pre(\ambsuf{n-1}{p})\,.
\end{align*}
For $q \in \Sub(\ambpre{n-1}{p}) \cap \Sub(\ambsuf{n-1}{p})$, the terms in the first and the second summand coincide and thus cancel. If $q \in \Sub(\ambpre{n-1}{p}) \setminus \Sub(\ambsuf{n-1}{p})$, then $q \not\leq \ambsuf{n-1}{p}$, and by \cref{amb_even_pre_suf_zero}, we have $\suf(\ambpre{n-1}{p}) \suf(q) \in I$. Analogously, the prefix of the second summand lies in $I$ when $q \in \Sub(\ambsuf{n-1}{p}) \setminus \Sub(\ambpre{n-1}{p})$. 

Thus, we have shown that $(\Bardzell(A),d)$ is a complex. It remains to show that $\epsilon$ is a quasi-isomorphism. We use the argument from \cite[Theorem~1]{Skoeldberg:2008}. 

\begin{step}
The map $\epsilon$ has an inverse up to homotopy as a map of right $A$-complexes. 
\end{step}
We consider the inclusion as right $A$-complexes
\begin{equation*}
\iota \colon A \to \Bardzell(A) \,,\quad a \mapsto 1 \otimes t(a) \otimes a
\end{equation*}
and the homotopy of right $A$-complexes
\begin{equation*}
\sigma \colon \Bardzell(A) \to \Bardzell(A) \,,\quad b \otimes p \otimes 1 \mapsto \sum_{\substack{bp=eqc\\q \in \Gamma_{n+1}}} e \otimes q \otimes c
\end{equation*}
when $p \in \Gamma_n$. By \cite[Theorem~1]{Skoeldberg:2008} one has
\begin{equation*}
d \sigma + \sigma d = \id + \iota \epsilon
\end{equation*}
as maps of right $A$-complexes.

Hence $\epsilon$ is a homotopy equivalence as a map of right $A$-complexes and thus, a quasi-isomorphism; the latter does not depend on the linearity. 
\end{proof}

\section{Diagonal map} \label{sec:diagonal}

The canonical resolution of $A$ as an $\env{A}$-module is the bar resolution $\barres(A)$; see \cite{Hochschild:1945} and also \cite[Section~IX.6]{Cartan/Eilenberg:1956}. Since both, Bardzell's resolution and the bar resolution, are complexes of free $\env{A}$-modules, there exist homotopy equivalences
\begin{equation} \label{bar_resolution:comparison}
\begin{tikzcd}
G \colon \barres(A) \ar[r,shift left] \& \Bardzell(A) \ar[l,shift left] \mskip6mu plus1mu \mathpunct{} \mkern-\thinmuskip{:}\mskip2mu F \nospacepunct{.}
\end{tikzcd}
\end{equation}
These \emph{comparison morphisms} were studied in \cite{Redondo/Roman:2018a}. It is well known that the bar resolution is equipped with a diagonal map 
\begin{equation*}
\Delta_{\barres} \colon \barres(A) \to \barres(A) \otimes_A \barres(A) \, ,
\end{equation*} 
see \cite[\S2.3]{Witherspoon:2019}. Hence, the comparison morphisms \cref{bar_resolution:comparison} induce a diagonal map on Bardzell's resolution. That is, a diagonal map $\Delta_{\Bardzell}$ is defined such that the top of the following diagram commutes.
\begin{equation}\label{induced_diagonal}
\begin{tikzcd}
\& \barres(A) \ar[r,"\Delta_{\barres}"] \& \barres(A) \otimes_A \barres(A) \ar[dr,"G \otimes G"] \\
\Bardzell(A) \ar[ur,"F"] \ar[dr,"\epsilon" swap,"\simeq"] \ar[rrr,"\Delta_{\Bardzell}"] \& \& \& \Bardzell(A) \otimes_A \Bardzell(A) \ar[dl,"\epsilon \otimes \epsilon","\simeq" swap] \&[-50pt]\\
\& A \& A \otimes_A A \ar[l,"\cong" swap,"\mu"] 
\end{tikzcd}
\end{equation}
Redondo and Roman described the induced diagonal map for quadratic string algebras; see \cite{Redondo/Roman:2018b}.

Alternatively, a diagonal map can be defined as a lift of the canonical isomorphism given by the multiplication ${\mu \colon A\otimes_A A \to A}$ on $A$; see \cite[\S2.3]{Witherspoon:2019}. By the Comparison Theorem \cite[Proposition~1.3.1]{Avramov:1998}, the lift is unique up to homotopy. Hence, a lift $\Delta$ coincides with $\Delta_{\Bardzell}$ up to homotopy. In the following, we construct a map $\Delta$ and show this map is a lift of the isomorphism $\mu^{-1}$.

\begin{theorem} \label{diagonal}
The $\env{A}$-linear homogeneous map $\Delta \colon \Bardzell(A) \to \Bardzell(A) \otimes_A \Bardzell(A)$, given by
\begin{equation} \label{eq:Delta}
\Delta_{n+1}(1 \otimes p \otimes 1) \colonequals \sum_{\substack{i+j=n-1\\i,j \geqslant -1}} \sum_{\substack{p = c q_2 b q_1 a\\q_1 \in \Gamma_i, q_2 \in \Gamma_j}} c \otimes q_2 \otimes b \otimes q_1 \otimes a
\end{equation}
for any $p \in \Gamma_n$, is, up to homotopy, the map induced from the diagonal map on the bar resolution.
\end{theorem}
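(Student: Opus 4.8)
The plan is to use the reduction already set up before the statement: by the Comparison Theorem \cite[Proposition~1.3.1]{Avramov:1998} any two lifts of $\mu^{-1}$ along the augmentation are homotopic, and $\Delta_{\Bardzell}$ is such a lift, so it suffices to check that the explicit map $\Delta$ of \cref{eq:Delta} is itself a lift of $\mu^{-1}$. Concretely, I must verify two things: (i) $\Delta$ is a morphism of complexes $\Bardzell(A) \to \Bardzell(A) \otimes_A \Bardzell(A)$, and (ii) $\Delta$ is compatible with augmentations, i.e. $(\epsilon \otimes_A \epsilon)\Delta = \mu^{-1}\epsilon$. First I would note that $\Delta$ is well defined and homogeneous: it is prescribed on the $\env{A}$-basis $\{1 \otimes p \otimes 1\}_{p \in \Gamma_n}$ and extended $\env{A}$-linearly, and a summand $c \otimes q_2 \otimes b \otimes q_1 \otimes a$ with $q_2 \in \Gamma_j$ and $q_1 \in \Gamma_i$ lies in the bidegree $(j+1,i+1)$ of the tensor product, of total degree $(j+1)+(i+1) = n+1$, matching the source.

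Condition (ii) is the easy part. The augmentation $\epsilon$ is supported in homological degree $0$, so $\epsilon \otimes_A \epsilon$ only detects the bidegree-$(0,0)$ summand, forcing $i = j = -1$ and hence $i+j = -2 = n-1$, i.e. $n = -1$. Thus for $n \geq 0$ both $(\epsilon \otimes_A \epsilon)\Delta_{n+1}$ and $\mu^{-1}\epsilon$ vanish on $\Bardzell(A)_{n+1}$, while for $n = -1$ a vertex $p$ admits the single decomposition with $q_1 = q_2 = p$ and trivial $a,b,c$, which is sent to $\mu^{-1}(p)$ as required.

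The substance is condition (i). Writing the total differential on $\Bardzell(A) \otimes_A \Bardzell(A)$ as $d \otimes 1 + (-1)^{j+1}\, 1 \otimes d$ on the bidegree-$(j+1,i+1)$ summand, I would expand both $(d \otimes 1 + (-1)^{j+1} 1 \otimes d)\Delta_{n+1}(1 \otimes p \otimes 1)$ and $\Delta_n(d_{n+1}(1 \otimes p \otimes 1))$ and compare them term by term. Both are sums indexed by decompositions of $p$ into an ordered pair of ambiguities of complementary degrees summing to $n$, but assembled in opposite orders: on the $\Delta d$ side one first selects an $(n-1)$-subambiguity $q \in \Sub(p)$ through $d$ and then splits $q$ via \cref{eq:Delta}, whereas on the $(d \otimes 1 + 1 \otimes d)\Delta$ side one first splits $p$ as $q_2 \mid q_1$ and then applies the differential \cref{bardzell_resn:diff} to one factor, replacing it by its subambiguities. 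The parity of $n$, and of $i$ and $j$, governs which form of the differential occurs, so the argument divides into cases; to set up the matching I would use \cref{amb_sub_position} to locate the smaller ambiguity inside $p$, and \cref{sub_amb_even} together with \cref{sub_amb_overlap} to control the two $(n-1)$-subambiguities arising in the even case.

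The hard part will be the cancellation of the leftover terms. On the $(d \otimes 1 + 1 \otimes d)\Delta$ side there appear terms in which a subambiguity of $q_2$ paired with $q_1$ (or symmetrically) fails to assemble into a single subambiguity of $p$; these must either vanish because a connecting path lands in $I$ or cancel in pairs against one another. This is precisely a ``spread-out'' analogue of the identity $d^2 = 0$ established in the proof of \cref{bardzell_resn}, and I expect to invoke \cref{amb_even_pre_suf_zero} to see that the relevant prefixes or suffixes lie in $I$ and so kill the unwanted terms, together with \cref{amb_containment} for the positional comparisons. The principal obstacle is therefore the sign and parity bookkeeping: one must check that the Koszul sign $(-1)^{j+1}$ and the internal minus sign of the even-degree differential conspire so that exactly the expected terms survive while the remainder cancels, uniformly across all parities of $n$, $i$, and $j$. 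Once the term-by-term matching is in place, (i) holds, and together with (ii) this shows $\Delta$ is a lift of $\mu^{-1}$, hence homotopic to $\Delta_{\Bardzell}$ as asserted.
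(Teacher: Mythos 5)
Your reduction is the same as the paper's: by the Comparison Theorem it suffices to show that $\Delta$ is a chain map lifting $\mu^{-1}$, and your verification of the augmentation condition (ii) is correct (the paper dispatches it in one line, noting $\mu(\epsilon\otimes\epsilon)\Delta=\epsilon$). But condition (i) is the entire content of the theorem, and your proposal does not prove it; it only announces the shape of a proof (``I would expand'', ``I expect to invoke'', ``one must check''). Moreover, the cancellation mechanism you predict is not the one that makes the identity work. You expect the leftover terms to die ``because a connecting path lands in $I$'', in analogy with $d^2=0$ and via \cref{amb_even_pre_suf_zero}. In the paper's proof that is not what happens: after \cref{amb_split} is used to normalize which of the outer paths $a,c$ are trivial in each parity case, the problematic sums --- for instance \cref{eq:even-d-Delta:even-second-suf,eq:even-d-Delta:even-second-pre} when $n$ is even --- cancel by a telescoping argument over the ordered set of $(k+1)$-ambiguities $r_1,\ldots,r_N$ contained in a fixed tail of $p$ (\cref{sub_amb_overlap}), and the whole difficulty is then to identify the two surviving boundary terms of the telescope with terms appearing on the $\Delta d$ side.

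That identification is where the real work sits, and your outline has no content there. For $n$ even it requires the claim $\ambsuf{k}{r_N} = \ambpre{k}{\left(\ambsuf{n-1}{p}\right)}$, proved by applying \cref{amb_odd_overlap_lift} and ruling out two configurations using the maximality of $r_N$ and \cref{amb_split}; for $n$ odd it requires two existence/uniqueness claims (that $q_2 b_1 (\ambsuf{k}{r_N})$ sits inside a unique $q\in\Sub(p)$, and conversely that every $q\in\Sub(p)$ other than $\ambpre{n-1}{p}$ arises this way), which need \cref{split-amb-bound}, \cref{amb_containment} and \cref{amb_split}. None of \cref{amb_split}, \cref{split-amb-bound}, or \cref{amb_odd_overlap_lift} appears in your plan, even though they are exactly the tools that make the term-by-term matching possible (your list --- \cref{amb_sub_position}, \cref{sub_amb_even}, \cref{sub_amb_overlap}, \cref{amb_containment} --- covers only part of what is needed). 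So while your strategy points in the right direction and agrees with the paper's in outline, the proposal as written has a genuine gap: the promised matching and cancellation depend on structural facts about ambiguities that you neither state nor prove.
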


Note that, implicitly, we are identifying 
\begin{equation*}
(A \otimes_E \kk \Gamma_j \otimes _E A) \otimes_A (A \otimes_E \kk \Gamma_i \otimes_E A) \cong A \otimes_E \kk \Gamma_j \otimes _E A \otimes_E \kk \Gamma_i \otimes_E A\,.
\end{equation*}

Before proving \cref{diagonal}, we show how, depending on the parity of $i$ and $j$, the decompositions in \cref{eq:Delta} can be simplified.

\begin{lemma} \label{amb_split}
Let $n$ be an integer and $i+j=n-1$. We assume $p$ is an $n$-ambiguity with 
\begin{equation*}
p = c q_2 b q_1 a \quad \text{for } q_1 \in \Gamma_i \text{ and } q_2 \in \Gamma_j\,.
\end{equation*}
If $i$ is odd, then $a=1$, and if $j$ is odd, then $c=1$. In particular, if $i$ and $j$ are odd, then \cref{amb_split_suf_pre} is the unique decomposition of $p$ into an $i$- and $j$-ambiguity.
\end{lemma}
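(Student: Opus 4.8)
The plan is to handle the two inner ambiguities $q_1$ and $q_2$ separately and feed each into \cref{amb_sub_position}, whose two halves are designed precisely to pin down the prefix (resp.\ suffix) of an \emph{odd} divisor. Throughout I write $p = c q_2 b q_1 a$, so that, regarding $q_1$ as a divisor of $p$, we have $\suf_p(q_1) = c q_2 b$ and $\pre_p(q_1) = a$, while regarding $q_2$ as a divisor we have $\suf_p(q_2) = c$ and $\pre_p(q_2) = b q_1 a$.

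First I would prove the claim about $a$. Assume $i$ is odd. Since $q_1$ is an $i$-ambiguity and $i+j = n-1$, the $i$-odd case of \cref{amb_sub_position} applies to the decomposition $p = (c q_2 b)\, q_1\, a$ and reduces the desired equality $a=1$ to the single containment $\ambsuf{j}{p} \leq c q_2 b$. To verify this, observe that $\ambsuf{j}{p}$ and $c q_2 b = \suf_p(q_1)$ are both suffixes of $p$, so the shorter is a suffix of the longer. Were $\ambsuf{j}{p}$ the longer, then $c q_2 b$ would be a \emph{proper} suffix of $\ambsuf{j}{p}$; composing with $q_2 \leq c q_2 b$ would exhibit the $j$-ambiguity $q_2$ as a proper divisor of the $j$-ambiguity $\ambsuf{j}{p}$, contradicting \cref{amb_uniqueness}. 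Hence $\ambsuf{j}{p} \leq c q_2 b$, and therefore $a = 1$.

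The claim about $c$ is the mirror image. Assuming $j$ odd, I would apply the $\ambpre{}{}$-analog of the odd case of \cref{amb_sub_position} to $q_2$, now playing the role of the odd divisor with complementary index $i$; this reduces $c = 1$ to $\ambpre{i}{p} \leq b q_1 a = \pre_p(q_2)$. The same comparison, this time of prefixes of $p$, together with $q_1 \leq b q_1 a$ and \cref{amb_uniqueness} applied to the $i$-ambiguity $\ambpre{i}{p}$, yields this containment and hence $c = 1$. For the ``in particular'' statement, suppose both $i$ and $j$ are odd, so $a = c = 1$ and $p = q_2 b q_1$ with $q_1$ a prefix and $q_2$ a suffix of $p$. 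Any two $i$-ambiguity prefixes of $p$ are comparable, so by \cref{amb_uniqueness} they coincide, forcing $q_1 = \ambpre{i}{p}$; dually $q_2 = \ambsuf{j}{p}$. This identifies $p = q_2 b q_1$ with \cref{amb_split_suf_pre}, and since $q_1$ and $q_2$ are forced, the remainder $b$ is determined too, giving uniqueness.

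The main obstacle I anticipate is the containment step $\ambsuf{j}{p} \leq c q_2 b$ and its prefix dual: this is the one point where one must rule out the \emph{a priori} possibility that the distinguished sub-ambiguity $\ambsuf{j}{p}$ of $p$ overshoots the suffix $c q_2 b$ cut out by the given decomposition. Getting the direction of divisibility right—both pieces are suffixes of $p$, so they are automatically comparable—and then invoking \cref{amb_uniqueness} to forbid one $j$-ambiguity from properly dividing another is the crux; once this is in place, everything else is routine bookkeeping with \cref{amb_sub_position}.
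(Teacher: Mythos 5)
Your proposal is correct and follows essentially the same route as the paper, whose entire proof is the one-line reduction to \cref{amb_sub_position}; your argument simply supplies the details of that reduction, verifying the required containments $\ambsuf{j}{p} \leq c q_2 b$ and $\ambpre{i}{p} \leq b q_1 a$ via suffix/prefix comparability together with \cref{amb_uniqueness}, which is exactly the intended bookkeeping.
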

\begin{proof}
This follows from \cref{amb_sub_position}.
\end{proof}

\begin{lemma} \label{split-amb-bound}
Let $n \geq i,j$ be integers and $p$ an $n$-ambiguity with
\begin{equation*}
p = c q_2 b q_1 a \quad\text{for } q_1 \in \Gamma_i, q_2 \in \Gamma_j\,.
\end{equation*}
Then, $i+j \leq n-1$. 
\end{lemma}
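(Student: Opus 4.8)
The plan is to argue entirely with left‑ambiguity decompositions and reduce the statement to the monotonicity and uniqueness facts in \cref{amb_containment,amb_uniqueness}. First I would dispose of the degenerate cases: if $i=-1$ or $j=-1$, then $q_1$ or $q_2$ is a trivial path and the claimed bound $i+j\leq n-1$ collapses to $j\leq n$ or $i\leq n$, which hold by the standing hypothesis $n\geq i,j$. So I may assume $i,j\geq 0$, that is, $q_1$ and $q_2$ have positive length. Then I write $p=u_0\ldots u_n$ as a left $n$-ambiguity and let $\ell$ be maximal with $u_\ell\ldots u_n\geq q_1$, where $q_1$ sits in the position prescribed by $p=cq_2bq_1a$. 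Applying \cref{amb_containment} to $q_1\leq p$ (legitimate since $n\geq i$) gives $n\geq \ell+i$; moreover the maximality of $\ell$ forces $q_1$ to meet the block $u_\ell$, so the left end of $q_1$ lies strictly inside $u_\ell$, its initial arrow $u_0'\leq u_\ell$ being nontrivial.

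The key observation is that the part of $p$ lying to the left of $q_1$, namely $cq_2b$, is a proper left‑aligned divisor of $\ambsuf{\ell}{p}=u_0\ldots u_\ell$: it agrees with $\ambsuf{\ell}{p}$ up to, but not including, the initial arrow of $q_1$, so $cq_2b\lneq \ambsuf{\ell}{p}$. In particular $q_2\leq cq_2b\lneq\ambsuf{\ell}{p}$, so $q_2$ is a \emph{proper} divisor of the $\ell$-ambiguity $\ambsuf{\ell}{p}$. By \cref{amb_uniqueness}(1), no proper divisor of an $\ell$-ambiguity is itself an $\ell$-ambiguity, and hence $j\neq\ell$.

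It remains to see $j\leq\ell$, which I would obtain from a second application of \cref{amb_containment}, this time to $q_2\leq p$ (legitimate since $n\geq j$): the comparison it provides shows that $q_2$ meets each of the blocks $u_{\ell_2},\ldots,u_{\ell_2+j}$, where $\ell_2\geq 0$ is the leftmost block $q_2$ reaches, so $q_2$ occupies at least $j+1$ consecutive blocks. Since $q_2$ lies entirely to the left of $q_1$, every block it meets has index at most $\ell$; counting blocks gives $j\leq\ell$. Combined with $j\neq\ell$ this yields $j\leq\ell-1$, whence $i+j\leq i+\ell-1\leq n-1$, as desired. The right‑ambiguity version is symmetric.

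I expect the main obstacle to be exactly the strictness $j<\ell$. The two reach bounds coming from \cref{amb_containment} only yield the non‑strict estimate $i+j\leq n$, since on their own they leave open the boundary configuration in which the rightmost block of $q_2$ coincides with the leftmost block $u_\ell$ of $q_1$. The point of the argument is that this off‑by‑one is supplied not by finer block bookkeeping but by the properness $q_2\lneq\ambsuf{\ell}{p}$ together with \cref{amb_uniqueness}, which rules the boundary case out cleanly; getting the setup so that the uniqueness statement applies (i.e.\ correctly identifying $q_2$ as a proper divisor of $\ambsuf{\ell}{p}$ rather than of $p$ itself) is the step that requires care.
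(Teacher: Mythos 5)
Your proof is correct and takes essentially the same route as the paper's: both decompose $p = u_0 \ldots u_n$ as a left ambiguity, take $\ell$ maximal with $q_1 \leq u_\ell \ldots u_n$, and combine $j < \ell$ (coming from $q_2 \lneq u_0 \ldots u_\ell$) with $\ell + i \leq n$ (coming from \cref{amb_containment}). The only difference is one of detail: where the paper simply asserts ``hence $j < \ell$,'' you justify that step explicitly, splitting it into $j \neq \ell$ (via \cref{amb_uniqueness}) and $j \leq \ell$ (via counting the blocks $q_2$ must meet).
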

\begin{proof}
We take decompositions as left ambiguities $p = u_0 \ldots u_n$, $q_1 = u'_0 \ldots u'_i\,$,
and pick $\ell$ maximal such that $q_1 \leq u_\ell \ldots u_n$. Then $q_2 \lneq u_0 \ldots u_\ell$, and hence $j < \ell$. 

We use \cref{amb_containment}. If $i$ is even, then $q_1 \not\leq u_\ell \ldots u_{\ell+i-1}$ and hence $\ell+i \leq n$. If $i$ is odd, then $q_1 \geq u_{\ell+1} \ldots u_{\ell+i}$ and hence $\ell+i \leq n$. Combining the inequalities we obtain $j+i < j+ \ell \leq n$, and thus, $i+j \leq n-1$.
\end{proof}

\begin{proof}[Proof of \cref{diagonal}]
As discussed at the beginning of this section, it is enough to show that $\Delta$ is a chain map and a lift of the isomorphism $\mu^{-1} \colon A \to A\otimes_A A$. It is straightforward to see that $\Delta$ is a lift of $\mu^{-1}$ since $\mu (\epsilon \otimes \epsilon) \Delta = \epsilon$ holds.

It remains to show that $\Delta$ is a chain map. Since the differential on $\Bardzell(A) \otimes_A \Bardzell(A)$ is $\id \otimes d + d \otimes \id$, we need to show that $\Delta d= (\id \otimes d+d\otimes\id)\Delta$. 

In the cases when $i$ or $j$ are equal to $-1$, there is nothing to show. So, we assume that $i,j \geq 0$ for the remainder of the proof. We break the proof into two cases, one for $n$ even and one for $n$ odd.

\begin{case}
Let $n$ be even and $p \in \Gamma_n$. The first summand of $(\id \otimes d + d \otimes \id)\Delta$ yields
\begin{subequations}
\begin{align}
&\quad (\id \otimes d) (\Delta(1 \otimes p \otimes 1)) \notag \\
=& \sum_{\substack{k+\ell=n-2\\k \text{ odd}}} \sum_{\substack{p=c q_2 b q_1 a\\q_2 \in \Gamma_\ell, q_1 \in \Gamma_{k+1}}} c \otimes q_2 \otimes b \suf(\ambpre{k}{q_1}) \otimes \ambpre{k}{q_1} \otimes a \label{eq:even-d-Delta:even-second-suf} \\
&\quad- \sum_{\substack{k+\ell=n-2\\k \text{ odd}}} \sum_{\substack{p=c q_2 b q_1 a\\q_2 \in \Gamma_\ell, q_1 \in \Gamma_{k+1}}} c \otimes q_2 \otimes b \otimes \ambsuf{k}{q_1} \otimes \pre(\ambsuf{k}{q_1}) a \label{eq:even-d-Delta:even-second-pre} \\
&\quad- \sum_{\substack{k+\ell=n-2\\k \text{ even}}} \sum_{\substack{p=c q_2 b q_1 a\\q_2 \in \Gamma_\ell, q_1 \in \Gamma_{k+1}}} \sum_{r \in \Sub(q_1)} c \otimes q_2 \otimes b \suf(r) \otimes r \otimes \pre(r) a\,. \label{eq:even-d-Delta:odd-second}
\end{align}
\end{subequations}
The Koszul sign rule when applying $(\id \otimes d)$ yields $(-1)^{\ell+1}$ for each summand. The second summand of the differential yields
\begin{subequations}
\begin{align}
&\quad (d \otimes \id)(\Delta(1 \otimes p \otimes 1)) \notag \\
&= \sum_{\substack{k+\ell=n-2\\k \text{ even}}} \sum_{\substack{p=c q_2 b q_1 a\\q_2 \in \Gamma_{\ell+1}, q_1 \in \Gamma_k}} \sum_{r \in \Sub(q_2)} c \suf(r) \otimes r \otimes \pre(r) b \otimes q_1 \otimes a \label{eq:even-d-Delta:even-first} \\
&\quad+ \sum_{\substack{k+\ell=n-2\\k \text{ odd}}} \sum_{\substack{p=c q_2 b q_1 a\\q_2 \in \Gamma_{\ell+1}, q_1 \in \Gamma_k}} c \suf(\ambpre{\ell}{q_2}) \otimes \ambpre{\ell}{q_2} \otimes b \otimes q_1 \otimes a \label{eq:even-d-Delta:odd-first-suf} \\
&\quad- \sum_{\substack{k+\ell=n-2\\k \text{ odd}}} \sum_{\substack{p=c q_2 b q_1 a\\q_2 \in \Gamma_{\ell+1}, q_1 \in \Gamma_k}} c \otimes \ambsuf{\ell}{q_2} \otimes \pre(\ambsuf{\ell}{q_2}) b \otimes q_1 \otimes a \label{eq:even-d-Delta:odd-first-pre}\,.
\end{align}
\end{subequations}
By \cref{amb_split}, we have $c=1$ in \cref{eq:even-d-Delta:even-first,eq:even-d-Delta:even-second-suf,eq:even-d-Delta:even-second-pre} and $a=1$ in \cref{eq:even-d-Delta:odd-first-suf,eq:even-d-Delta:odd-first-pre,eq:even-d-Delta:odd-second}. 

\begin{step} \label{step:even-telescoping}
Combine \cref{eq:even-d-Delta:even-second-suf,eq:even-d-Delta:even-second-pre}.
\end{step}

We fix $k+\ell=n-2$ with $k$ and $\ell$ odd. Let $r$ be the path $p = \ambsuf{\ell}{p} r$. Then, we order the set of all $(k+1)$-ambiguities contained in $r$,
\begin{equation*}
\set{\tilde{r} \in \Gamma_{k+1}}{\tilde{r} \leq r} = \{r_1, \ldots, r_N\}\,,
\end{equation*}
as in \cref{sub_amb_overlap}. By construction, we have $r_1 = \ambpre{k+1}{p}$. Note that in \cref{eq:even-d-Delta:even-second-suf,eq:even-d-Delta:even-second-pre}, we have $q_2 = \ambsuf{\ell}{p}$, and $q_1$ takes on precisely the values $r_1, \dotsc, r_N$. By \cref{sub_amb_overlap}, we have that $\ambsuf{k}{r_i} = \ambpre{k}{r_{i+1}}$ for any $1 \leq i \leq N-1$, and therefore, the two sums cancel except for the terms with $\ambpre{k}{r_1}$ and $\ambsuf{k}{r_{N}}$.

\begin{claim}
$\ambsuf{k}{r_N} = \ambpre{k}{\left(\ambsuf{n-1}{p}\right)}$.
\end{claim}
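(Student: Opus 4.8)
The plan is to prove the stronger statement that both sides equal the block
\[
s \colonequals u_{\ell+1}\ldots u_{n-1}
\]
read off from the left decomposition $p = u_0 \ldots u_n$. With this decomposition we have $\ambsuf{\ell}{p} = u_0 \ldots u_\ell$, hence $r = u_{\ell+1}\ldots u_n$ and $\ambsuf{n-1}{p} = \ambsuf{\ell}{p}\, s$; in particular $s$ is a prefix of $\ambsuf{n-1}{p}$ and a suffix of $r$, and it consists of $k+1$ consecutive pieces since $\ell + 1 + k = n-1$. I will show (A) $\ambsuf{k}{r_N} = s$ and (B) $\ambpre{k}{\ambsuf{n-1}{p}} = s$.

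For (A) I first reduce to the claim that $r_N$ is a suffix of $r$. Granting this, $\ambsuf{k}{r_N}$ is a suffix of $r$ whose target agrees with that of $u_{\ell+1}$, so the maximal index $\ell'$ with $u_{\ell'}\ldots u_n \geq \ambsuf{k}{r_N}$ equals $\ell+1$; since $k$ is odd, the final clause of \cref{amb_containment} then forces $\ambsuf{k}{r_N} = u_{\ell+1}\ldots u_{\ell+1+k} = s$. It remains to see that $r_N$ is a suffix of $r$. By the ordering of \cref{sub_amb_overlap}, a $(k+1)$-ambiguity that is a suffix of $r$ must be the target-most one, namely $r_N$, since a suffix ambiguity admits no $(k+1)$-ambiguity further toward the target and so the chain stops there. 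Thus it suffices to exhibit one $(k+1)$-ambiguity that is a suffix of $r$: starting from the target arrow of $r$, which is the target of the relation $u_{\ell+1}u_{\ell+2}$, I extend greedily toward the source through the $k+1$ consecutive overlapping relations $u_iu_{i+1}$ with $\ell+1 \leq i \leq n-1$ that $r = u_{\ell+1}\ldots u_n$ inherits from $p$. These relations guarantee that each step of the construction succeeds and, by \cref{amb_containment}, that the resulting $(k+1)$-ambiguity stays inside $r$.

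I expect this existence step to be the main obstacle, since it is exactly the kind of extension that can fail in general; compare \cref{example_cone}. Here the point is that every relation required lies within $r$ and is provided by $p$ being an $n$-ambiguity, so the construction cannot terminate before reaching $u_n$. Step (B) is then routine: part (A) shows that $s$ is a $k$-ambiguity, and $s$ is a prefix of $\ambsuf{n-1}{p} = \ambsuf{\ell}{p}\, s$. By \cref{amb_uniqueness} no proper divisor of a $k$-ambiguity is a $k$-ambiguity, so $s$ is the unique prefix $k$-ambiguity of $\ambsuf{n-1}{p}$, whence $s = \ambpre{k}{\ambsuf{n-1}{p}}$. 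Combining (A) and (B) yields $\ambsuf{k}{r_N} = \ambpre{k}{\ambsuf{n-1}{p}}$, as claimed.
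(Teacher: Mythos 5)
There is a genuine gap, and it sits exactly at the step you flagged as the main obstacle: the existence of a suffix $(k+1)$-ambiguity of $r$, and hence your identification of both sides with the left-decomposition block $s = u_{\ell+1}\ldots u_{n-1}$, is false in general. Your greedy construction requires each minimal piece to be a basis path, but the minimality condition in \cref{ambiguity} only forbids proper \emph{suffixes} of $u_iu_{i+1}$ from lying in $I$; proper \emph{prefixes} may lie in $I$ (this is precisely the point of \cref{counterex-2-amb-I} and the remark preceding it), and then the greedily chosen piece is zero in $A$. Concretely, in \cref{counterex-2-amb-I} take the $4$-ambiguity $p = \alpha|\delta\gamma|\beta\alpha|\delta\gamma|\beta\alpha$, so $n=4$ and $k=\ell=1$. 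Then $\ambsuf{1}{p} = \alpha\delta\gamma$ and $r = \beta\alpha\delta\gamma\beta\alpha$. Starting from the target arrow $w_0 = \beta$ of $r$, the minimal $w_1$ with $w_0w_1 \in I$ is $w_1 = \alpha\delta\gamma$, which lies in $I$: the construction stalls at the first step, and indeed $r$ has no suffix $2$-ambiguity at all. The unique $2$-ambiguity contained in $r$ is $r_1 = r_N = \alpha|\delta\gamma|\beta\alpha$, a \emph{proper prefix} of $r$, so your claim (A1) that $r_N$ is a suffix of $r$ fails; your maximality argument and the appeal to the last clause of \cref{amb_containment} are fine, but they are conditional on (A1). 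Claim (B) collapses in the same example: here $\ambpre{1}{\left(\ambsuf{3}{p}\right)} = \alpha\delta\gamma$, while $s = u_2u_3 = \beta\alpha\delta\gamma$ is not even a $1$-ambiguity, so the premise ``$s$ is a $k$-ambiguity'' (which you imported from (A)) is false. Note that the paper's claim itself does hold here: both $\ambsuf{1}{r_N}$ and $\ambpre{1}{\left(\ambsuf{3}{p}\right)}$ equal the $1$-ambiguity $\alpha\delta\gamma$ --- but neither equals your $s$.

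The underlying misconception is that prefix ambiguities align with blocks of the \emph{left} decomposition. By \cref{compare_left_right} this is true in the quadratic case (where your argument is correct), but in general $\ambpre{k}{\left(\ambsuf{n-1}{p}\right)}$ is governed by the \emph{right} decomposition of $\ambsuf{n-1}{p}$, and the remainder $\tilde{b}$ in $\ambsuf{n-1}{p} = \ambsuf{\ell}{p}\,\tilde{b}\,\ambpre{k}{\left(\ambsuf{n-1}{p}\right)}$ can be nontrivial (in the example above, $\tilde{b} = \beta$); your proof would force $\tilde{b} = 1$. This is why the paper's argument never pins either side to an explicit block: it places $\ambpre{k}{\left(\ambsuf{n-1}{p}\right)}$ inside $r$ using the uniqueness of odd--odd splittings (\cref{amb_split}), assumes the two $k$-ambiguities differ, and applies \cref{amb_odd_overlap_lift} to their overlap to manufacture a $(k+1)$-ambiguity contradicting either the maximality of $r_N$ or \cref{amb_split}. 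Any repair of your approach would have to abandon the identification with $u_{\ell+1}\ldots u_{n-1}$ and work with right-decomposition data, at which point one is essentially led back to an overlap argument of the paper's kind.
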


\begin{figure}[h]
\begin{equation*}
\begin{tikzcd}[row sep=tiny]
\bullet \ar[r,"a_N"] \& \bullet \ar[rrrrrr,"r_N"] \&[-2em] \&[-2em] \&[-2em] \&[+5em] \&[-2em] \&[-2em] \bullet \ar[rr,"b_N"] \&[-2em] \& \bullet \\
\&\&\& \bullet \ar[rrrr,"{\ambsuf{k}{r_N}}"] \&\&\&\& \bullet \\[-.5em]
\bullet \ar[rrrr,"\pre(\ambsuf{n-1}{p})" swap] \&\&\&\& \bullet \ar[rrrr,"{\ambpre{k}{\left(\ambsuf{n-1}{p}\right)}}" swap] \&\&\&\& \bullet \ar[r,"\tilde{b}" swap] \& \bullet \\
\bullet \ar[rr,"\pre(\ambsuf{n-1}{p})" swap] \&\& \bullet \ar[rrrr,"{\ambpre{k}{\left(\ambsuf{n-1}{p}\right)}}" swap] \&\&\&\& \bullet \ar[rrr,"\tilde{b}" swap] \&\&\& \bullet
\end{tikzcd}
\end{equation*}
\caption{Graphical depiction of the decompositions of the path $r$, defined by $p = \ambsuf{\ell}{p} r$ in \cref{step:even-telescoping}. The two possible configurations are in the third and fourth line. It is shown that the only configuration that occurs has $\ambsuf{k}{r_N} = \ambpre{k}{\left(\ambsuf{n-1}{p}\right)}$.} \label{step:even-telescoping:setup}
\end{figure}
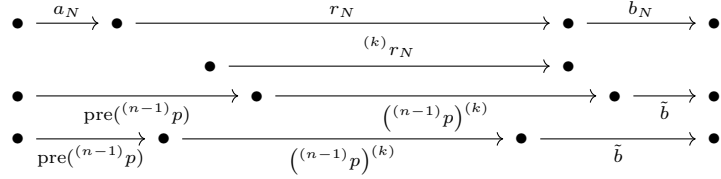

Since $k$ and $\ell$ are odd, there is a unique decomposition of $\ambsuf{n-1}{p}$ into a $k$- and a $\ell$-ambiguity. That is,
\begin{equation*}
\ambsuf{n-1}{p} = \ambsuf{\ell}{p} \tilde{b} \ambpre{k}{\left(\ambsuf{n-1}{p}\right)},
\end{equation*}
for some $\tilde{b}$; see \cref{amb_split}. Note that we must have $\ambpre{k}{\left(\ambsuf{n-1}{p}\right)} \leq r$. If the claim is false, then $\ambsuf{k}{r_N} \not\leq \ambsuf{n-1}{p}$. It is straightforward to check that we can apply \cref{amb_odd_overlap_lift} to $\ambsuf{k}{r_N}$ and $\ambpre{k}{\left(\ambsuf{n-1}{p}\right)}$. There are two configurations of these paths in $r$ as depicted in \cref{step:even-telescoping:setup}. So, there exists a $(k+1)$-ambiguity $\tilde{r} \leq r$ with $\ambsuf{k}{\tilde{r}} = \ambpre{k}{\left(\ambsuf{n-1}{p}\right)}$ or $\tilde{r} \leq \tilde{b} \ambpre{k}{\left(\ambsuf{n-1}{p}\right)}$. The first is a contradiction to the fact that $r_N$ is the maximal element in the set of $(k+1)$-ambiguities contained in $r$, and the second a contradiction to \cref{amb_split}. Hence, the claim has been shown. 

Using the above, we can rewrite \cref{eq:even-d-Delta:even-second-suf,eq:even-d-Delta:even-second-pre} as
\begin{equation*}
\sum_{\substack{k+\ell=n-2\\k \text{ odd}}} \left( 1 \otimes \ambsuf{\ell}{p} \otimes b_1 \otimes \ambpre{k}{p} \otimes 1 - 1 \otimes \ambsuf{\ell}{p} \otimes b_2 \otimes \ambpre{k}{\left(\ambsuf{n-1}{p}\right)} \otimes \pre(\ambsuf{n-1}{p}) \right)\,,
\end{equation*}
where $b_1$ and $b_2$ depend on $k$ and $\ell$ and are chosen appropriately. 

\begin{step}
Combine \cref{eq:even-d-Delta:even-second-suf,eq:even-d-Delta:even-second-pre,eq:even-d-Delta:odd-first-suf,eq:even-d-Delta:odd-first-pre}.
\end{step}

The same argument as in \cref{step:even-telescoping} simplifies \cref{eq:even-d-Delta:odd-first-suf,eq:even-d-Delta:odd-first-pre}. Combining the terms \cref{eq:even-d-Delta:even-second-suf,eq:even-d-Delta:even-second-pre,eq:even-d-Delta:odd-first-suf,eq:even-d-Delta:odd-first-pre} yields
\begin{equation} \label{eq:even-d-Delta:odd}
\begin{aligned}
&\quad \sum_{\substack{k+\ell=n-2\\k \text{ odd}}} \left(\suf(\ambpre{n-1}{p}) \otimes \ambsuf{\ell}{\!\left(\ambpre{n-1}{p}\right)} \otimes b' \otimes \ambpre{k}{p} \otimes 1 \right. \\
&\quad \left. - 1 \otimes \ambsuf{\ell}{p} \otimes b \otimes \ambpre{k}{\left(\ambsuf{n-1}{p}\right)} \otimes \pre(\ambsuf{n-1}{p})\right)\,,
\end{aligned}
\end{equation}
where $b$ and $b'$ depend on $k$ and $\ell$, and are chosen appropriately. 

\begin{step}
Combine \cref{eq:even-d-Delta:even-first,eq:even-d-Delta:odd-second}.
\end{step}

We fix $k+\ell=n-2$ with $k$ and $\ell$ even. Given $p = c q_2 b \ambpre{k+1}{p}$ with $q_2 \in \Gamma_\ell$ and $r \in \Sub(\ambpre{k+1}{p})$, we have $q_2 \leq \ambsuf{\ell+1}{p}$ by \cref{split-amb-bound}. By \cref{amb_sub_position} we obtain
\begin{equation*}
\pre(r) \lneq \pre(\ambsuf{n-1}{p}) \iff \ambsuf{\ell+1}p \leq c q_2 b \suf(r)\,.
\end{equation*}
This means, any summand of \cref{eq:even-d-Delta:odd-second} for which $\pre(r) \lneq \pre(\ambsuf{n-1}{p})$ holds appears as a summand of \cref{eq:even-d-Delta:even-first}. Thus, \cref{eq:even-d-Delta:even-first,eq:even-d-Delta:odd-second} together give
\begin{equation} \label{eq:even-d-Delta:even}
\begin{aligned}
&\quad \sum_{\substack{k+\ell=n-2\\k \text{ even}}} \sum_{\substack{\ambpre{n-1}{p}=c q_2 b q_1 a\\q_2 \in \Gamma_\ell, q_1 \in \Gamma_k}} \suf(\ambpre{n-1}{p}) c \otimes q_2 \otimes b \otimes q_1 \otimes a\\
&\quad -\sum_{\substack{k+\ell=n-2\\k \text{ even}}} \sum_{\substack{\ambsuf{n-1}{p}=c q_2 b q_1 a\\q_2 \in \Gamma_\ell, q_1 \in \Gamma_k}} c \otimes q_2 \otimes b \otimes q_1 \otimes a \pre(\ambsuf{n-1}{p})\,.
\end{aligned}
\end{equation}

Combining \cref{eq:even-d-Delta:odd,eq:even-d-Delta:even}, we obtain
\begin{align*}
&\quad d(\Delta(1 \otimes p \otimes 1)) \\
&= \sum_{k+\ell=n-2} \sum_{\substack{\ambpre{n-1}{p} = c q_2 b q_1 a\\q_2 \in \Gamma_\ell, q_1 \in \Gamma_k}} \suf(\ambpre{n-1}{p}) c \otimes q_2 \otimes b \otimes q_1 \otimes a \\
&\quad - \sum_{k+\ell=n-2} \sum_{\substack{\ambsuf{n-1}{p} = c q_2 b q_1 a\\q_2 \in \Gamma_\ell, q_1 \in \Gamma_k}} c \otimes q_2 \otimes b \otimes q_1 \otimes a \pre(\ambsuf{n-1}{p}) \\
&= \Delta(d(1 \otimes p \otimes 1))\,.
\end{align*}
\end{case}

\begin{case}
Let $n$ be an odd integer and $p$ an $n$-ambiguity. Then, the first summand of the differential on $\Bardzell(A) \otimes \Bardzell(A)$ yields
\begin{subequations}
\begin{align}
&\quad (\id \otimes d)(\Delta(1 \otimes p \otimes 1)) \notag \\
=& - \sum_{\substack{k+\ell=n-2\\k \text{ odd}}} \sum_{\substack{p = c q_2 b q_1 a\\q_2 \in \Gamma_\ell,q_1 \in \Gamma_{k+1}}} c \otimes q_2 \otimes b \suf(\ambpre{k}{q_1}) \otimes \ambpre{k}{q_1} \otimes a \label{eq:odd-d-Delta:even-second-suf} \\
&\quad + \sum_{\substack{k+\ell=n-2\\k \text{ odd}}} \sum_{\substack{p = c q_2 b q_1 a\\q_2 \in \Gamma_\ell, q_1 \in \Gamma_{k+1}}} c \otimes q_2 \otimes b \otimes \ambsuf{k}{q_1} \otimes \pre(\ambsuf{k}{q_1}) a \label{eq:odd-d-Delta:even-second-pre} \\
&\quad + \sum_{\substack{k+\ell=n-2\\k \text{ even}}} \sum_{\substack{p = c q_2 b q_1 a\\q_2 \in \Gamma_\ell, q_1 \in \Gamma_{k+1}}} \sum_{r \in \Sub(q_1)} c \otimes q_2 \otimes b \suf(r) \otimes r \otimes \pre(r) a\,. \label{eq:odd-d-Delta:odd-second}
\end{align}
\end{subequations}
The Koszul sign rule when applying $(\id \otimes d)$ yields $(-1)^{\ell+1}$ for each summand. The second summand of the differential yields
\begin{subequations}
\begin{align}
&\quad (d \otimes \id)(\Delta(1 \otimes p \otimes 1)) \notag \\
&= \sum_{\substack{k+\ell=n-2\\k \text{ even}}} \sum_{\substack{p = c q_2 b q_1 a\\q_2 \in \Gamma_{\ell+1}, q_1 \in \Gamma_k}} c \suf(\ambpre{\ell}{q_2}) \otimes \ambpre{\ell}{q_2} \otimes b \otimes q_1 \otimes a \label{eq:odd-d-Delta:even-first-suf} \\
&\quad - \sum_{\substack{k+\ell=n-2\\k \text{ even}}} \sum_{\substack{p = c q_2 b q_1 a\\q_2 \in \Gamma_{\ell+1}, q_1 \in \Gamma_k}} c \otimes \ambsuf{\ell}{q_2} \otimes \pre(\ambsuf{\ell}{q_2}) b \otimes q_1 \otimes a \label{eq:odd-d-Delta:even-first-pre} \\
&\quad + \sum_{\substack{k+\ell=n-2\\k \text{ odd}}} \sum_{\substack{p = c q_2 b q_1 a\\q_2 \in \Gamma_{\ell+1}, q_1 \in \Gamma_k}} \sum_{r \in \Sub(q_2)} c \suf(r) \otimes r \otimes \pre(r) b \otimes q_1 \otimes a\,. \label{eq:odd-d-Delta:odd-first}
\end{align}
\end{subequations}
By \cref{amb_split} we have $a=1$ and $c=1$ in \cref{eq:odd-d-Delta:odd-first,eq:odd-d-Delta:odd-second}. 

\begin{step}
Combine \cref{eq:odd-d-Delta:even-second-suf,eq:odd-d-Delta:even-second-pre}.
\end{step}

We fix $k+\ell=n-2$ with $k$ odd and $\ell$ even. For a decomposition $p = c q_2 r$ where $q_2 \in \Gamma_\ell$ such that $r$ contains a $(k+1)$-ambiguity, we order the set of all $(k+1)$-ambiguities contained in $r$,
\begin{equation*}
\set{\tilde{r} \in \Gamma_{k+1}}{\tilde{r} \leq r} = \{r_1, \ldots, r_N\}
\end{equation*}
as in \cref{sub_amb_overlap}. By construction, we have $r_1 = \ambpre{k+1}{p}$. So, we can rewrite \cref{eq:odd-d-Delta:even-second-suf,eq:odd-d-Delta:even-second-pre} as
\begin{equation} \label{eq:odd-d-Delta:telescoping}
\sum_{\substack{k+\ell=n-2\\k \text{ odd}}} \sum_{\substack{p=cq_2r\\q_2 \in \Gamma_\ell, r \geqslant \ambpre{k+1}{p}}} \left(c \otimes q_2 \otimes b_1 \otimes \ambsuf{k}{r_N} \otimes a_1 - c \otimes q_2 \otimes b_2 \otimes \ambpre{k}{p} \otimes 1 \right) \,,
\end{equation}
where $b_1$, $a_1$ and $b_2$ depend on $q_2$. 

\begin{claim}
There exists a unique $q \in \Sub(p)$ with $q_2 b_1 (\ambsuf{k}{r_N}) \leq q$. 
\end{claim}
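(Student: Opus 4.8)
The plan is to pin down the unique $q$ explicitly: it will be the $(n-1)$-ambiguity having $q_2$ as its $\ell$-suffix-ambiguity and $\ambsuf{k}{r_N}$ as its $k$-prefix-ambiguity, and I will show separately that such a $q \in \Sub(p)$ exists and that nothing else in $\Sub(p)$ contains the path $w \colonequals q_2 b_1 (\ambsuf{k}{r_N})$. The first observation is structural: $n-1$ is even and $\ell+k=n-2$, so by \cref{amb_split_suf_pre} every $(n-1)$-ambiguity $q$ decomposes as $q = \ambsuf{\ell}{q}\, b\, \ambpre{k}{q}$ with $\ambsuf{\ell}{q} \in \Gamma_\ell$ and $\ambpre{k}{q} \in \Gamma_k$. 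Moreover, by \cref{split-amb-bound} the value $\ell+k=n-2$ is the largest sum possible for a $\Gamma_\ell$-divisor and a $\Gamma_k$-divisor of $q$; hence a $\Gamma_\ell$-divisor sitting at the target end and a $\Gamma_k$-divisor at the source end of $q$ can only be the suffix- and prefix-ambiguities. So it suffices to produce a unique $q \leq p$ in $\Gamma_{n-1}$ with $\ambsuf{\ell}{q}=q_2$ and $\ambpre{k}{q}=\ambsuf{k}{r_N}$.

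For existence, I would build $q$ by extending $q_2$ one piece toward the source and then gluing on $\ambsuf{k}{r_N}$. Writing the left decomposition $q_2 = w_0 \ldots w_\ell$, the comparison \cref{compare_left_right} identifies the source-side pieces $w_{\ell+2},\ldots,w_{n-1}$ of the desired $q$ with the interior pieces of $\ambsuf{k}{r_N}$, so the only genuinely new datum is the junction piece $w_{\ell+1}$, chosen minimal with $w_\ell w_{\ell+1} \in I$ and $w_{\ell+1} \notin I$. This is exactly the situation of \cref{amb_odd_overlap_lift}, applied to the odd $k$-ambiguity $\ambsuf{k}{r_N}$ and the neighboring suffix-ambiguity on the $q_2$-side: since $a_1, b_1 \notin I$, the lift exists and the resulting pieces attach to $q_2$. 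Iterating the identification via \cref{amb_containment} then shows the glued path is an $(n-1)$-ambiguity $q \leq p$ with $\ambsuf{\ell}{q}=q_2$ and $\ambpre{k}{q}=\ambsuf{k}{r_N}$ (indeed $w = q$).

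For uniqueness, suppose $q' \in \Sub(p)$ also satisfies $w \leq q'$. Then $q_2 \in \Gamma_\ell$ is a divisor of $q'$ at its target end and $\ambsuf{k}{r_N} \in \Gamma_k$ a divisor at its source end, with $\ell+k=n-2$. By the equality case of \cref{split-amb-bound} together with \cref{amb_sub_position}, these extremal positions force $q_2 = \ambsuf{\ell}{q'}$ and $\ambsuf{k}{r_N} = \ambpre{k}{q'}$. Finally, an $(n-1)$-ambiguity is determined by its $\ell$-suffix-ambiguity together with its position in $p$: by \cref{amb_uniqueness} the left decomposition is unique, and the forward choice $w_m \mapsto w_{m+1}$ (minimal with $w_m w_{m+1} \in I$) is deterministic, so $\ambsuf{\ell}{q'}=q_2$ at a fixed position yields $q' = q$.

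The main obstacle is the existence step, and specifically the verification that the junction extension of $q_2$ is legitimate and that the glued path lands precisely on $\ambsuf{k}{r_N}$ as its $k$-prefix without running off the source end of $p$. This is exactly the failure mode illustrated in \cref{example_cone}, where an ambiguity cannot be extended; what rules it out here is the maximality of $r_N$ among the $(k+1)$-ambiguities contained in $r$, i.e.\ that $r_N$ is the top element of the ordered set of \cref{sub_amb_overlap}. I expect this bookkeeping, matching the extension of $q_2$ against the right decomposition of $\ambsuf{k}{r_N}$, to be the technical heart, in direct analogy with the claim already proved in \cref{step:even-telescoping}.
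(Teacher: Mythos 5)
Your argument anchors $q$ at the wrong end, and this is fatal to both existence and uniqueness. You characterize the desired $q$ by $\ambsuf{\ell}{q}=q_2$ and $\ambpre{k}{q}=\ambsuf{k}{r_N}$ and construct it by extending $q_2$ toward the source. The second identity is indeed forced (since $k$ is odd, \cref{amb_split} gives $a'=1$ in any decomposition $q=c'q_2b_1(\ambsuf{k}{r_N})a'$, and a $k$-ambiguity that is a prefix must be the prefix $k$-ambiguity), but the first is not: $\ell$ is even, so neither \cref{amb_split} nor \cref{split-amb-bound} (which has no ``equality case'' in the paper) prevents $q$ from overshooting the target of $q_2$. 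Concretely, take the linearly oriented quiver with arrows $a,b,c,d,e,f,g$ composing to $gfedcba$, and $I=(cba,\,edc,\,gfe)$. Then $p=gfedcba$ is a $3$-ambiguity, so $n=3$, $k=1$, $\ell=0$, and $\Sub(p)=\{gfedc,\,edcba\}$. For the decomposition $p=c\,q_2\,r$ with $c=g$, $q_2=f$, $r=edcba$ one gets $r_N=edcba$, $\ambsuf{1}{r_N}=edc$ and $b_1=1$, so $q_2b_1(\ambsuf{k}{r_N})=fedc$. The unique $q\in\Sub(p)$ containing $fedc$ is $q=gfedc$, whose suffix $0$-ambiguity is $g\neq f=q_2$; so your characterization is false. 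Worse, your existence step collapses here: there is no junction piece $w_1$ with $fw_1\in I$ and $w_1\notin I$, since no minimal relation has target arrow $f$. This is exactly the non-extendability phenomenon of \cref{example_cone}, and the maximality of $r_N$ does not rule it out ($r_N$ is maximal in this example, yet the extension of $q_2$ still fails to exist). Your appeal to \cref{amb_odd_overlap_lift} is also inapplicable: that lemma requires two distinct ambiguities of the same odd degree, while $q_2$ has even degree $\ell$.

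The paper's proof runs in the opposite direction precisely to avoid this. Because \cref{amb_split} pins the source of any admissible $q$ to the source of $\ambsuf{k}{r_N}$, the paper constructs $q$ by extending $\ambsuf{k}{r_N}$ toward the target, working with right decompositions: \cref{split-amb-bound} excludes that $\ambsuf{k}{r_N}$ lies too close to the source of $p$; \cref{amb_containment} (giving $\hat{v}_k\geq v_{k+1}$) guarantees each extension step exists inside $p$; and the maximality of $r_N$ is used to show that the first new piece $\hat{v}_{k+1}$ reaches across $b_1$ into $q_2$ (otherwise $\hat{v}_{k+1}\ldots\hat{v}_0$ would be a $(k+1)$-ambiguity in $r$ beyond $r_N$). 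In the example above this yields $\hat{v}_2=gf$, which properly contains $q_2=f$ --- exactly the overshoot your setup excludes. Uniqueness then comes for free: the forced equality $\ambpre{k}{q}=\ambsuf{k}{r_N}$ fixes the source of $q$, and two $(n-1)$-ambiguities in $p$ with the same source coincide by \cref{amb_uniqueness}. Your uniqueness argument instead routes through the false identity $q_2=\ambsuf{\ell}{q'}$ and therefore does not go through.
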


Consider the decompositions as right ambiguities
\begin{equation*}
p = v_n \ldots v_0 \,, \quad q_2 = v'_\ell \ldots v'_0 \quad \text{and} \quad \ambsuf{k}{r_N} = \hat{v}_k \ldots \hat{v}_0\,.
\end{equation*}
If $v_1 v_0 \leq a_1$, then using \cref{split-amb-bound} twice yields $k+\ell+2 \leq n-1$. This is a contradiction. Hence, $v_1 v_0 < a_1$ and $\hat{v}_0 \leq v_1$. Since $k$ is odd, we have $\hat{v}_k \geq v_{k+1}$ by \cref{amb_containment}. Hence, there exists $\hat{v}_{k+1} \leq v_{k+2}$ such that $\hat{v}_{k+1} \ldots \hat{v}_0$ is an $(k+1)$-ambiguity. By the choice of $r_N$ we have $v'_0 \leq \hat{v}_{k+1} \leq v_{k+2}$. So, applying \cref{amb_containment} inductively, we can extend $\hat{v}_0 \ldots \hat{v}_{k+1}$ to a $k+\ell-1=n-1$-ambiguity that satisfies the desired conditions. Since $k$ is odd, the $(n-1)$-ambiguity $q$ is unique by \cref{amb_split}.

\begin{claim}
For any $q \in \Sub(p)$ with $q \neq \ambpre{n-1}{p}$, there exists a decomposition $p = c q_2 r$ with $q_2 \in \Gamma_\ell$ and $r \geq \ambpre{k+1}{p}$ such that $q_2 b (\ambsuf{k}{r_N}) \leq q$. 
\end{claim}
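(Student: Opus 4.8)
The plan is to establish the second claim as the converse of the previous one: rather than extending a given $(k+1)$-ambiguity to an $(n-1)$-ambiguity, I read a decomposition $p = c q_2 r$ directly off the structure of $q$ and then check, via the first claim, that it returns $q$.

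Concretely, since $q \leq p$ I write $p = \suf(q)\, q\, \pre(q)$. The path $q$ is an $(n-1)$-ambiguity with $n-1$ even, and $k+\ell = n-2 = (n-1)-1$ with $k$ odd; hence \cref{amb_split_suf_pre} together with \cref{amb_split} splits $q$ as $q = \ambsuf{\ell}{q}\,\tilde b\,\ambpre{k}{q}$, the odd-degree $k$-part being forced into the prefix position $\ambpre{k}{q} \in \Gamma_k$. I then set $c \colonequals \suf(q)$, $q_2 \colonequals \ambsuf{\ell}{q} \in \Gamma_\ell$ and $r \colonequals \tilde b\,\ambpre{k}{q}\,\pre(q)$, so that $p = c q_2 r$. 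It remains to verify (i) that $r \geq \ambpre{k+1}{p}$, so that the ordering $r_1,\dots,r_N$ of \cref{sub_amb_overlap} is available, and (ii) that $\ambsuf{k}{r_N} = \ambpre{k}{q}$. Granting (ii), we obtain $q_2\,\tilde b\,(\ambsuf{k}{r_N}) = \ambsuf{\ell}{q}\,\tilde b\,\ambpre{k}{q} = q$, and then the uniqueness in the first claim identifies this $q$ with the given one, as desired.

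For (i), the point is that $r$ shares the right-hand end of $p$ and contains $\ambpre{k}{q}$; the hypothesis $q \neq \ambpre{n-1}{p}$ gives $\pre(q) \neq 1$, since any two $(n-1)$-ambiguities that are prefixes of $p$ coincide by \cref{amb_uniqueness}, and this provides the room to the right of $\ambpre{k}{q}$ needed to locate $\ambpre{k+1}{p}$ inside $r$. I expect (i) to be routine.

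The hard part is (ii). First one must show that $\ambpre{k}{q}$ extends on its right, into $\pre(q)$, to a $(k+1)$-ambiguity $\tilde r \leq r$ with $\ambsuf{k}{\tilde r} = \ambpre{k}{q}$. This is genuinely delicate: it is \emph{not} realized by $\ambpre{k+1}{q}$, whose $k$-suffix differs from $\ambpre{k}{q}$ by the remainder of \cref{amb_split_suf_pre}, and by \cref{example_cone} such right-extensions can fail to exist in general; here their existence should follow by applying \cref{amb_containment} to the left decomposition of $q$ viewed inside the longer ambiguity $p$, using $\pre(q) \neq 1$. Second, and this is the crux, one must show $\tilde r$ is the \emph{maximal} $(k+1)$-ambiguity $r_N$ in $r$. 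I would argue by contradiction exactly as in the first claim and in \cref{step:even-telescoping}: any $(k+1)$-ambiguity strictly to the left of $\tilde r$ in $r$ would, through \cref{amb_odd_overlap_lift}, combine with $q_2 = \ambsuf{\ell}{q}$ to yield an $(n-1)$-ambiguity occupying a position incompatible with the unique odd split provided by \cref{amb_split}, a contradiction. Finally, the excluded case $q = \ambpre{n-1}{p}$ is precisely the one that would correspond to the negative term involving $\ambpre{k}{p}$ in \cref{eq:odd-d-Delta:telescoping}, which is why it must be omitted from this surjectivity statement.
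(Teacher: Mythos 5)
Your opening move is the same as the paper's: take $q_2 \colonequals \ambsuf{\ell}{q}$ and write $p = c q_2 r$. Your step (i) is also the paper's first step, but ``room'' is not an argument: the paper deduces $\ambpre{k+1}{p} \leq r$ from \cref{amb_sub_position}, applied to the even-degree ambiguity $q_2 \leq p$, using that $q \neq \ambpre{n-1}{p}$ forces $\suf(q) \lneq \suf(\ambpre{n-1}{p})$ (otherwise $q$ would be a proper divisor of $\ambpre{n-1}{p}$, contradicting \cref{amb_uniqueness}). You correctly isolate the hypothesis $\pre(q) \neq 1$, so this part is repairable; but as written it is asserted, not proved.

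The genuine gap is your step (ii), which is the entire content of the claim and which you only sketch. The paper never proves $\ambsuf{k}{r_N} = \ambpre{k}{q}$ by exhibiting a source-ward extension of $\ambpre{k}{q}$: it invokes the preceding claim to obtain the unique $q' \in \Sub(p)$ with $q_2 b_1 (\ambsuf{k}{r_N}) \leq q'$, and the identification $q' = q$ then needs no new extension at all --- by \cref{amb_split} ($k$ odd), $\ambsuf{k}{r_N}$ must be the $k$-prefix of $q'$, and if $q' \neq q$ then (comparing endpoints and using \cref{amb_uniqueness}) the path $\ambsuf{k}{r_N}$ would sit inside $q$ with a nontrivial prefix remainder, contradicting \cref{amb_split} again. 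The point of this indirect design is that the only extension ever performed, inside the proof of the preceding claim, goes in the \emph{target} direction from $\ambsuf{k}{r_N}$, where it is licensed by the containment $\hat{v}_k \geq v_{k+1}$ from \cref{amb_containment} together with the maximality of $r_N$. Your (ii)(a) instead requires an extension in the \emph{source} direction from $\ambpre{k}{q}$, which is exactly the kind of extension that can fail (your own citation of \cref{example_cone}); ``applying \cref{amb_containment}'' is not a mechanism that produces it, since at the source end of $q$ the containments that lemma yields run the unhelpful way (the pieces of $p$ contain those of $q$, because $n-1$ is even), so they do not manufacture the needed relation inside $\pre(q)$. Your (ii)(b) is also not executable as stated: \cref{amb_odd_overlap_lift} requires two ambiguities of the \emph{same odd} degree, whereas you propose to combine a $(k+1)$-ambiguity ($k+1$ is even) with $q_2 \in \Gamma_\ell$ ($\ell$ even); the paper's analogous arguments always apply it to pairs of $k$-ambiguities. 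Finally, your logic is inverted: once (ii) is granted, $q_2 \tilde{b}\, (\ambsuf{k}{r_N}) = q$ and the claim is immediate, so your closing appeal to the uniqueness in the preceding claim does nothing --- whereas in the paper that claim is what carries the whole proof, precisely so that the delicate direct computation of $r_N$ you propose is never needed.
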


Set $q_2 = \ambsuf{\ell}{q}$ and write $p = c q_2 r$. By assumption $\suf(q) \lneq \suf(\ambpre{n-1}{p})$. Hence by \cref{amb_sub_position} we have $\ambpre{k+1}{p} \leq r$. By the previous claim there exists an $(n-1)$-ambiguity $q'$ satisfying the desired conditions, and by construction, $q = q'$. 

\begin{step}
Combine \cref{eq:odd-d-Delta:even-second-suf,eq:odd-d-Delta:even-second-pre,eq:odd-d-Delta:odd-first}.
\end{step}

Comparing \cref{eq:odd-d-Delta:odd-first} with the second summand of \cref{eq:odd-d-Delta:telescoping}, we see that the summand $\suf(r) \otimes r \otimes \pre(r) b \otimes \ambpre{k}{p} \otimes 1$ appears in \cref{eq:odd-d-Delta:telescoping} if and only if {$\pre(r) b \ambpre{k}{p} \geq \ambpre{k+1}{p}$}. By \cref{amb_sub_position}, this is equivalent to $\suf(r) \lneq \suf(\ambpre{n-1}{p})$. Hence, the terms cancel, and the remaining terms of \cref{eq:odd-d-Delta:odd-first} are precisely those terms where {$\suf(r) \geq \suf(\ambpre{n-1}{p})$}. The first summand of \cref{eq:odd-d-Delta:telescoping} yields the terms for any other $q \in \Sub(p)$ by the above claims. Thus, we can write \cref{eq:odd-d-Delta:even-second-suf,eq:odd-d-Delta:even-second-pre,eq:odd-d-Delta:odd-first} as
\begin{equation*}
\sum_{q \in \Sub(p)} \sum_{\substack{k+\ell=n-2 \\ k \text{ odd}}} \sum_{\substack{q = c q_2 b q_1 a\\q_2 \in \Gamma_\ell, q_1 \in \Gamma_k}} \suf(q) c \otimes q_2 \otimes b \otimes q_1 \otimes a \pre(q)\,.
\end{equation*}
Analogously, we can simplify \cref{eq:odd-d-Delta:even-first-suf,eq:odd-d-Delta:even-first-pre,eq:odd-d-Delta:odd-second}, and obtain
\begin{equation*}
d(\Delta(1 \otimes p \otimes 1)) = \Delta(d(1 \otimes p \otimes 1))\,. \qedhere
\end{equation*}
\end{case}
\end{proof}

\section{The cup product of Hochschild cohomology}\label{sec:cup-product}

Since $\Bardzell(A)$ is a free resolution of $A$ over $\env{A}$, we can use it to give an explicit description of Hochschild cohomology $\HH{A} \colonequals \Ext{*}{\env{A}}{A}{A}$. Using the Hom-tensor adjunction we obtain
\begin{equation*}
\Hom{\env{A}}{\Bardzell_{n+1}(A)}{A} = \Hom{\env{A}}{A \otimes_E \kk \Gamma_n \otimes_E A}{A} \cong \Hom{\env{E}}{\kk \Gamma_n}{A} \equalscolon \kk \parallel{\Gamma_n}{\cB}\,,
\end{equation*}
and we denote the complex with modules $\kk \parallel{\Gamma_n}{\cB}$ with the induced differential by $\kk \parallel{\Gamma}{\cB}$. A basis of $\kk \parallel{\Gamma_n}{\cB}$ is given by the maps
\begin{equation*}
(\parallel{p}{b})(q) = \begin{cases}
b & \text{if } q=p \\
0 & \text{else}
\end{cases} \quad \text{for } q \in \Gamma_n,
\end{equation*}
where $p \in \Gamma_n$ and $b \in \cB$ such that $s(p) = s(b)$ and $t(p) = t(b)$. The notation $\parallel{-}{-}$ indicates that the paths $p$ and $b$ are parallel, starting and ending at the same vertex. 

We give an explicit description of the differential. We need to treat the even and odd case separately. 

\begin{case}
Let $n$ be an even integer and $\parallel{p}{b} \in \kk \parallel{\Gamma_{n-1}}{\cB}$. For an $n$-ambiguity $q$, the differential is given by
\begin{equation} \label{del_even}
\partial^n(\parallel{p}{b})(q) = (\parallel{p}{b})(d_{n+1}(q)) = \begin{cases}
0 & \text{if } p \not\leq q \\
cb - ba & \text{if } cp = pa = q \\
cb & \text{if } cp = q \text{ and $p$ not a suffix of $q$} \\
- ba & \text{if } pa = q \text{ and $p$ not a prefix of $q$} \,.
\end{cases}
\end{equation}
\end{case}

\begin{case}
Let $n$ be an odd integer and $\parallel{p}{b} \in \kk \parallel{\Gamma_{n-1}}{\cB}$. For an $n$-ambiguity $q$, the differential is given by
\begin{equation} \label{del_odd}
\partial^n(\parallel{p}{b})(q) = \begin{cases}
0 & \text{if }p \not\leq q \\
\displaystyle \sum_{i=1}^N c_i b a_i & \text{if } c_i p a_i = q \text{ and } a_i \neq a_j \text{ for } i \neq j \,.
\end{cases}
\end{equation}
\end{case}

The diagonal map on $\Bardzell(A)$ given in \cref{diagonal} induces a chain map
\begin{equation*}
\smile \colon \Hom{\env{A}}{\Bardzell(A)}{A} \times \Hom{\env{A}}{\Bardzell(A)}{A} \to \Hom{\env{A}}{\Bardzell(A)}{A}\,,
\end{equation*}
which induces the cup product on Hochschild cohomology; see the discussion before \cref{diagonal}. The cup product is given by the formula
\begin{equation} \label{cup_prod_Hom}
(\parallel{p_2}{b_2} \smile \parallel{p_1}{b_1})(q) = \sum_{q = e p_2 c p_1 a} e b_2 c b_1 a
\end{equation}
for any $(i+j-1)$-ambiguity $q$ where $\parallel{p_1}{b_1} \in \kk \Gamma_i$ and $\parallel{p_2}{b_2} \in \kk \Gamma_j$.

By the discussion at the beginning of \cref{sec:diagonal}, preceding \cref{diagonal}, this cup product coincides with the classical cup product coming from the bar resolution up to homotopy. Hence, we can use \cref{del_even,del_odd} to compute Hochschild cohomology and \cref{cup_prod_Hom} to describe the algebra structure on the Hochschild cohomology. We utilize the formulas to describe the Hochschild cohomology algebra in the following examples.

\begin{example}
Let $A = \kk Q/I$ be a finite dimensional quadratic monomial algebra. Then, any $n$-ambiguity is of length $(n+1)$, and in the formula for the diagonal map \cref{eq:Delta}, there is at most one summand for every $i+j=n-1$ given by $p = q_2 q_1$. On $\kk \parallel{\Gamma}{\cB}$ this yields the formula
\begin{equation*}
\parallel{p_2}{b_2} \smile \parallel{p_1}{b_1} = \begin{cases}
\parallel{p_2 p_1}{b_2 b_1} & \text{if } p_2 p_1 \in \Gamma_{i+j-1} \text{ and } b_2 b_1 \in \cB \\
0 & \text{else}
\end{cases}
\end{equation*}
for any $\parallel{p_1}{b_1} \in \kk \Gamma_i$ and $\parallel{p_2}{b_2} \in \kk \Gamma_j$. This is the same cup product that Redondo and Roman obtain using the comparison maps; see \cite[4.2]{Redondo/Roman:2018b}. 
\end{example}

\begin{example}
We consider the same quiver with relations as in \cref{example_cone}:
\begin{equation*}
\begin{tikzcd}[column sep=small]
1 \ar[rr,"\alpha" swap] \ar[dr,"{\beta}" swap] \&\& 2 \ar[ll,"\zeta" swap,bend right=30] \\
\& 3 \ar[ur,"\gamma" swap]
\end{tikzcd}
\quad \text{with} \quad I = (\beta\zeta,\zeta\gamma,\alpha\zeta\alpha,\zeta\alpha\zeta)\,.
\end{equation*}
The monomial algebra $A = \kk Q/I$ is a string algebra, but not gentle. By direct calculation one obtains
\begin{align*}
\HH[0]{A} &= \langle \parallel{1}{1} + \parallel{2}{2} + \parallel{3}{3}, \parallel{1}{\zeta\alpha}, \parallel{2}{\alpha\zeta} \rangle \\
\HH[4m+1]{A} &= \langle \parallel{(\alpha\zeta)^{3m}\alpha}{\alpha}, \parallel{(\zeta\alpha)^{3m}\zeta}{\zeta}, \parallel{(\alpha\zeta)^{3m}\alpha}{\gamma \beta}, \rangle \\
\HH[4m+2]{A} &= \langle \parallel{(\alpha\zeta)^{3m+1}\alpha}{\alpha} + \parallel{(\zeta\alpha)^{3m+1}\zeta}{\zeta}, \parallel{(\alpha\zeta)^{3m+1}\alpha}{\gamma\beta} \rangle \\
\HH[4m+3]{A} &= \langle \parallel{\beta(\zeta\alpha)^{3m}\zeta\gamma}{3}, \parallel{(\zeta\alpha)^{3m+2}}{\zeta\alpha} \rangle \\
\HH[4m+4]{A} &= \langle \parallel{\beta(\zeta\alpha)^{3m+1}\zeta\gamma}{3}, \parallel{(\alpha\zeta)^{3m+3}}{\alpha\zeta}, \parallel{(\zeta\alpha)^{3m+3}}{\zeta\alpha} \rangle
\end{align*}
for $m\geq 0$. The element $\parallel{1}{1} + \parallel{2}{2} + \parallel{3}{3}$ is the unit of $\HH{A}$. We describe, up to commutativity, all nonzero multiplications in positive degrees. We set $w \colonequals (\parallel{(\alpha\zeta)^{3m+1}\alpha}{\alpha} + \parallel{(\zeta\alpha)^{3m+1}\zeta}{\zeta}) \in \HH[4m+2]{A}$. Then we obtain
\begin{equation*}
\begin{aligned}
\parallel{(\alpha\zeta)^{3\ell}\alpha}{\alpha} \smile w &= \parallel{(\alpha\zeta)^{3(\ell+m)+2}}{\alpha\zeta} \\
\parallel{(\zeta\alpha)^{3\ell}\zeta}{\zeta} \smile w &= \parallel{(\zeta\alpha)^{3(\ell+m)+2}}{\zeta\alpha} \\
(\parallel{(\alpha\zeta)^{3\ell+1}\alpha}{\alpha} + \parallel{(\zeta\alpha)^{3\ell+1}\zeta}{\zeta}) \smile w &= \parallel{(\alpha\zeta)^{3(\ell+m+1)}}{\alpha\zeta} + \parallel{(\zeta\alpha)^{3(\ell+m+1)}}{\zeta\alpha}\,.
\end{aligned}
\end{equation*}
In particular, we see that the algebra $\HH{A}$ is not finitely generated as a $\kk$-algebra. However, the algebra $\HH{A}/\mathcal{N}$, where $\mathcal{N}$ is the ideal of nilpotent elements, is finitely generated; cf.\@ \cite{Green/Snashall/Solberg:2006}. 
\end{example}

\begin{example}\label{truncated cycle}
Let $m,n$ be positive integers such that $\operatorname{char}(\kk)$ does not divide $m$. We consider the quiver
\begin{equation*}
\begin{tikzcd}[column sep=small,row sep=small]
\& \bullet \ar[r,"x_1"] \& \bullet \ar[dr,"x_2"] \\
\bullet \ar[ur,"x_n"] \& \& \& \bullet \ar[d,"x_3"] \\
\bullet \ar[u,"x_{n-1}"] \& \& \& \bullet \ar[lll,dashed,bend left=60]
\end{tikzcd}
\quad \text{with}\quad I = (\set{x_i \cdots x_{i+m-1}}{1 \leq i \leq n})\,.
\end{equation*}
The Hochschild cohomology of the associated monomial algebra $A$ was described in \cite{Locateli:1999}, and its algebra structure in \cite{Bardzell/Locateli/Marcos:2000}. The latter defined a product on $\kk \parallel{\Gamma_d}{\cB}$ and showed that, on cohomology, it coincides with the cup product. In the following, we will see, that our product on $\kk \parallel{\Gamma_d}{\cB}$, as described in \cref{cup_prod_Hom}, does not coincide with the product used in \cite{Bardzell/Locateli/Marcos:2000}. However, on cohomology, the products are the same.

The $(2\ell-1)$-ambiguities of $A$ are precisely the paths of length $\ell m$, and the $2\ell$-ambiguities the paths of length $\ell m+1$. For convenience, we will denote by $|p|$ the length of the path $p$. 

Let $\parallel{p}{b} \in \parallel{\Gamma_{i-1}}{\cB}$ and $\parallel{p'}{b'} \in \parallel{\Gamma_{j-1}}{\cB}$. Then 
\begin{equation*}
\parallel{p}{b} \smile \parallel{p'}{b'} = \parallel{pp'}{bb'} \quad \text{if $i$ or $j$ even}\,.
\end{equation*}
If $i$ and $j$ are odd, then $|p|+|p'|=m \frac{i+j}{2}-m+2$ and every $(i+j-1)$-ambiguity is of length $m \frac{i+j}{2}$. Hence,
\begin{equation*}
\parallel{p}{b} \smile \parallel{p'}{b'} = 0 \quad \text{if $i$ and $j$ odd and $|b|>0$ or $|b'|>0$}\,.
\end{equation*}
In the remaining case, when $|b| = |b'| = 0$, the cup product can have multiple terms. 

In comparison, the product on $\kk \parallel{\Gamma_d}{\cB}$ given in \cite{Bardzell/Locateli/Marcos:2000} is
\begin{equation*}
\parallel{p}{b} \mathrel{\overline{\smile}} \parallel{p'}{b'} = \begin{cases}
\parallel{pp'}{bb'} & \text{if } pp' \in \Gamma_{i+j-1} \\
0 & \text{else}\,.
\end{cases}
\end{equation*}
That is, the cases where the products need not coincide are for $i$ and $j$ odd and $|b|=|b'|=0$. 

By \cite[Lemma~6, Proposition~8]{Locateli:1999}, 
\begin{equation*}
\ker(\partial^{2 \ell+1}) = \bigoplus_{j=1}^{c-1} \kk \parallel{\Gamma_{2 \ell-1}}{Q_j}
\end{equation*}
where $Q_j$ the set of all paths of length $j$ in $Q$. In particular, elements $\parallel{p}{b} \in \parallel{\Gamma_{i-1}}{\cB}$ for $i$ odd and $|b|=0$ do not appear as nonzero summands in a cocycle. Hence, our product $\smile$ and the product $\mathrel{\overline{\smile}}$ of \cite{Bardzell/Locateli/Marcos:2000} coincide on the cocycles and therefore on cohomology. 
\end{example}

\section{Hochschild Cohomology of Triangular Monomial Algebras}

A quiver algebra with relations is \emph{triangular} if the underlying quiver has no oriented cycles. For a triangular monomial algebra $A$, the differentials \cref{del_even,del_odd} on the complex $\kk \parallel{\Gamma}{\cB}$ can be simplified. When $n$ is odd, the case $cp = pa = q$ cannot occur. When $n$ is even, the decomposition $cpa = q$ is unique, and hence, there is exactly one summand whenever $p \leq q$. These simplifications give us some control over a generating set of the cocycles given by the irreducible cocycles.

\begin{definition} \label{cocycle}
We fix a cocycle
\begin{equation} \label{cocycle_exp}
x = \sum_{i = 1}^k \alpha_i (\parallel{p_i}{b_i}) \in \kk \parallel{\Gamma_n}{\cB}
\end{equation}
of degree $n+1$ with $\alpha_i \neq 0$ for $1 \leq i \leq k$ and $\parallel{p_i}{b_i}$ pairwise distinct. We say $x$ is \emph{irreducible} if for each sequence $1 \leq i_1 < \ldots < i_\ell \leq k$ with $\ell < k$ and non-zero elements $\beta_j \in \kk $ for $1 \leq j \leq \ell$ one has
\begin{equation*} 
\partial\left(\sum_{j=1}^\ell \beta_j (\parallel{p_{i_j}}{b_{i_j}}) \right) \neq 0\,.
\end{equation*}
The irreducible elements form a generating set of the cocycles in $\kk \parallel{\Gamma_n}{\cB}$.
\end{definition}

\begin{lemma}\label{inter}
Given a triangular monomial algebra $A=\kk Q/I$ with basis $\cB$ and an irreducible cocycle $x=\sum_{i=1}^{k} \alpha_i (\parallel{p_i}{b_i}) \in \kk \parallel{\Gamma_{n}}{\cB}$ with $\alpha_i \in \kk$ for $n \geq 0$, there exist nontrivial paths $\tilde{p}$ and $\tilde{b}$ with $p_i = c_i \tilde{p} a_i$ and $b_i = c_i \tilde{b} a_i$ for $a_i, c_i \in \cB$ for all $1 \leq i \leq k$.
\end{lemma}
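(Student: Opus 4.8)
The plan is to exploit the strong rigidity that triangularity imposes on the differential, as already noted in the preamble to \cref{cocycle}: for a triangular monomial algebra, whenever $p \leq q$ the decomposition $q = cpa$ is unique, so each basis element $\parallel{q}{c b a}$ in $\partial^n(\parallel{p}{b})$ arises from at most one source. More precisely, I would set up the computation of $\partial^n(x)$ for the given irreducible cocycle $x = \sum_{i=1}^k \alpha_i(\parallel{p_i}{b_i})$ using the simplified differentials \cref{del_even,del_odd}, and record, for each summand $\parallel{p_i}{b_i}$, which parallel pairs $\parallel{q}{b}$ appear in $\partial^n(\parallel{p_i}{b_i})$. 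The key structural fact is that applying $\partial$ replaces the pair $(p_i, b_i)$ by pairs $(q, c_i b_i a_i')$ where $q$ is an $(n+1)$-ambiguity containing $p_i$ and $c_i, a_i'$ are the uniquely determined prefix/suffix paths. Since $x$ is a cocycle, all of these must cancel, and since $x$ is \emph{irreducible}, no proper nonempty subsum already cancels; this is what forces the $\parallel{p_i}{b_i}$ to be linked to one another through shared boundary terms.

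First I would argue that any two consecutive summands in this cancellation pattern must share a common ``core'' after stripping off a common prefix and suffix. Concretely, if a boundary term $\parallel{q}{b}$ produced from $(p_i, b_i)$ cancels against one produced from $(p_{i'}, b_{i'})$, then $q$ contains both $p_i$ and $p_{i'}$ as divisors in fixed positions, and the coefficient contributions $c_i b_i a_i$ and $c_{i'} b_{i'} a_{i'}$ must agree as elements of $\cB$. Comparing these in the triangular (hence oriented-cycle-free) setting, where each path has a well-defined position and length, lets me extract a common factorization: there is a path $\tilde p$ dividing all the $p_i$ and a corresponding parallel $\tilde b$ dividing all the $b_i$, with $p_i = c_i \tilde p a_i$ and $b_i = c_i \tilde b a_i$ using the \emph{same} $c_i, a_i$ on both the $p$-side and the $b$-side. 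The matching of $c_i, a_i$ between $p_i$ and $b_i$ is exactly what the equality of the boundary contributions enforces: the prefix/suffix that gets attached to $b_i$ in $\partial$ is literally the same $\suf, \pre$ data that is attached to $p_i$, by the formulas \cref{del_even,del_odd}.

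Then I would promote this pairwise linking to a global statement by an irreducibility/connectivity argument. Define a graph on $\{1,\dotsc,k\}$ with an edge between $i$ and $i'$ whenever a boundary term of $\parallel{p_i}{b_i}$ cancels against one of $\parallel{p_{i'}}{b_{i'}}$. Irreducibility says exactly that this graph is connected: if it split into two components, the subsum over one component would already be a cocycle, contradicting that $x$ is irreducible with $\ell < k$. Connectivity, together with the pairwise common-core statement, should propagate a single common $\tilde p, \tilde b$ across all $k$ summands. I would need to check that the common core is consistent along a path in the graph — that the $\tilde p$ extracted from the edge $\{i,i'\}$ and that from $\{i',i''\}$ agree — which follows because both are divisors of $p_{i'}$ in the same position, and the uniqueness of divisor positions in a triangular monomial algebra (no oriented cycles, so a fixed path occurs in a fixed place) forces them to coincide or be comparably nested; intersecting the cores along the connected graph yields a single $\tilde p$, and dually $\tilde b$.

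The main obstacle I anticipate is the bookkeeping in the second step: verifying that the cancellation of boundary terms genuinely forces the prefix/suffix data on the $b_i$ to match that on the $p_i$, rather than merely forcing some weaker relation, and handling the even versus odd parity of $n$ separately (in the odd case \cref{del_odd} sums over several positions $a_i$, so one must track a whole family of boundary terms at once, while in the even case \cref{del_even} has a signed two-term shape). I would also need to confirm that the extracted $\tilde p$ and $\tilde b$ are \emph{nontrivial}, which should come from the hypothesis $n \geq 0$ together with the fact that the $\parallel{p_i}{b_i}$ are genuinely distinct and linked, so the common core cannot be all of every $p_i$ (else the terms would be identical) nor reduce to a trivial path (else the cocycle condition would already be violated in degree $n$). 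Carefully separating these parity cases and nailing down the ``nontrivial'' claim is where the real work lies; the connectivity argument itself is then formal.
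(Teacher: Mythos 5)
Your proposal takes essentially the same route as the paper's proof: your pairwise ``common core'' extraction from cancelling boundary terms is the paper's preliminary overlap observation (depicted in \cref{fig:del_description}), and your connectivity-plus-propagation argument is precisely the paper's induction along a sequence of summands linked by shared boundary terms, with triangularity (no oriented cycles, hence unique positions of divisors and of parallel sub-paths) justifying the consistency of the intersected cores, exactly as you anticipate. The only point you do not mention is the paper's separate treatment of $n=0$, where it shows an irreducible cocycle has a single summand; otherwise your plan matches the paper's argument.
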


Before we prove the \namecref{inter}, we make an observation for general monomial algebras on overlapping elements of $\parallel{\Gamma_n}{\cB}$ for $n \geq 0$. Let $\partial(\parallel{p}{b})(q) = \partial(\parallel{p'}{b'})(q) \neq 0$ for $\parallel{p}{b}, \parallel{p'}{b'} \in \parallel{\Gamma_n}{\cB}$ and an $(n+1)$-ambiguity $q$. We choose $q = cpa = c'p'a'$ such that $cba = c'b'a'$. Without loss of generality, we may assume $a \leq a'$. Then, there exist nontrivial paths $\tilde{p} \leq p,p'$ and $\tilde{b} \leq b,b'$ such that
\begin{equation*}
p = \tilde{c} \tilde{p} \,,\quad p' = \tilde{p} \tilde{a} \,,\quad b = \tilde{c} \tilde{b} \quad \text{and} \quad b' = \tilde{b} \tilde{a}
\end{equation*}
for some $\tilde{a} \leq a'$ and $\tilde{c} \leq c$. This is depicted in \cref{fig:del_description}. Loosely speaking, this means that $b$ differs from $p$ at most along the `intersection' of $p$ and $p'$. 

\begin{figure}[h]
\begin{tikzpicture}
\coordinate (A) at (0,-0.2);
\coordinate (B) at ($(A)+(1,0)$);
\coordinate (C) at ($(B)+(1,0)$);
\coordinate (D) at ($(C)+(3,0)$);
\coordinate (E) at ($(D)+(1,0)$);
\coordinate (F) at ($(E)+(1,0)$);

\draw[->] (A) -- node[description] {$q$} (F);

\draw[->] ([yshift=1em,xshift=0.2em]A) -- node[description] {$a$} ([yshift=1em,xshift=-0.2em]B);
\draw[->] ([yshift=1em,xshift=0.2em]B) -- node[description] {$p$} ([yshift=1em,xshift=-0.2em]D);
\draw[->] ([yshift=1em,xshift=0.2em]D) -- node[description] {$c$} ([yshift=1em,xshift=-0.2em]F);

\draw[->] ([yshift=-1em,xshift=0.2em]A) -- node[description] {$a'$} ([yshift=-1em,xshift=-0.2em]C);
\draw[->] ([yshift=-1em,xshift=0.2em]C) -- node[description] {$p'$} ([yshift=-1em,xshift=-0.2em]E);
\draw[->] ([yshift=-1em,xshift=0.2em]E) -- node[description] {$c'$} ([yshift=-1em,xshift=-0.2em]F);

\draw[->] ([yshift=2em,xshift=0.2em]B) -- ([yshift=2em,xshift=0.2em]C) to[bend left=30] node[description] {$b$} ([yshift=2em,xshift=-0.2em]D);

\draw[->] ([yshift=-2em,xshift=0.2em]C) to[bend right=30] node[description] {$b'$} ([yshift=-2em,xshift=-0.2em]D) -- ([yshift=-2em,xshift=-0.2em]E);
\end{tikzpicture}
\caption{Graphical representation of the arrangement of the paths when $\partial(\parallel{p}{b})(q) = \partial(\parallel{p'}{b'})(q) \neq 0$ as described in \cref{del_odd}. All the parallel arrows lie on the same path, same for the bend arrows.} \label{fig:del_description}
\end{figure}
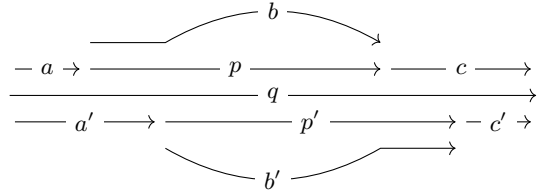

\begin{proof}[Proof of \cref{inter}]
For $n=0$ it is enough to show that any irreducible cocycle has at most one non-zero summand. Suppose $\partial(\parallel{p}{b})(q) = \partial(\parallel{p'}{b'})(q) \neq 0$ with $p \neq p'$. Since $A$ is triangular, $b=p$, $b'=p'$ and $\partial(\parallel{p}{p})(q) = \partial(\parallel{p'}{p'})(q) = q = 0$ as $Q \in \Gamma_1$ is a relation in $A$. This is a contradiction. Hence any irreducible cocycle has at most one non-zero summand and the claim is satisfied.

Suppose $n \geq 1$ and $k > 1$. Given a sequence $i_0, \ldots, i_\ell$ of positive integers $\leq k$ and $n$-ambiguities $q_1, \ldots ,q_\ell$ such that:
\begin{gather*}
\partial(\parallel{p_{i_j}}{b_{i_j}})(q_j) = c_j b_{i_j} a_j = c'_{j+1} b_{i_{j+1}} a'_{j+1} = \partial(\parallel{p_{i_{j+1}}}{b_{i_{j+1}}})(q_j) \neq 0
\end{gather*}
for all $1 \leq j \leq \ell$, we will prove by induction on $\ell$ that there exist nontrivial paths $\tilde{p}$ and $\tilde{b}$ with $p_{i_j} = \tilde{a}_j \tilde{p} \tilde{c}_j$ and $b_{i_j} = \tilde{a}_j \tilde{b} \tilde{c}_j$ with $\tilde{a}_j, \tilde{c}_j \in \cB$ for all $1 \leq j \leq \ell$. 

For $\ell = 1$ this holds by the previous discussion. Suppose the claim holds for $\ell$, and we want to prove it for $\ell+1$. Then
\begin{equation*}
\partial(\parallel{p_{i_{\ell}}}{b_{i_{\ell}}})(q_{\ell}) = c_{\ell} \tilde{c}_{\ell} \tilde{b} \tilde{a}_{\ell} a_{\ell} = c'_{\ell+1} b_{i_{\ell+1}} a'_{\ell+1} \neq 0
\end{equation*}
and $q = c_{\ell} \tilde{c}_{\ell} \tilde{p} \tilde{a}_{\ell} a_{\ell} = c'_{\ell+1} p_{i_{\ell+1}} a'_{\ell+1}$. Hence, there exists a path $\tilde{p}'$ such that $\tilde{p} = \tilde{c} \tilde{p}' \tilde{a}$ and $p_{i_{\ell+1}} = \tilde{c}'_{\ell+1} \tilde{p}' \tilde{a}'_{\ell+1}$ with $\tilde{a}, \tilde{c}, \tilde{c}'_{\ell+1}, \tilde{a}'_{\ell+1} \in \cB$. The latter holds since $c_{\ell} \tilde{c}_{\ell}, \tilde{a}_{\ell} a_{\ell}, c'_{\ell+1}, a'_{\ell+1} \in \cB$. One can think of $\tilde{p}'$ as the intersection of $\tilde{p}$ and $p_{i_{\ell+1}}$. We let $\tilde{b}'$ be the path parallel to $\tilde{p}'$ inside $\tilde{b}$. Since $A$ is triangular, the path $\tilde{b}'$ is unique. It remains to set $\tilde{a}'_j \colonequals \tilde{a}_j \tilde{a}$ and $\tilde{c}'_j \colonequals \tilde{c}_j \tilde{c}$ for $1 \leq j \leq \ell$. These elements together with $\tilde{p}'$ and $\tilde{b}'$ satisfy the desired conditions.

Since $x$ is irreducible, there exists a sequence from $\parallel{p_1}{b_1}$ to $\parallel{p_k}{b_k}$ passing through every $\parallel{p_i}{b_i}$ with $i=2, \cdots, k-1$ satisfying the conditions above. This finishes the proof. 
\end{proof}

\begin{lemma} \label{tria_cup_Hom}
Let $A = \kk Q/I$ be a triangular monomial algebra and $x \in \kk \parallel{\Gamma_{m-1}}{\cB}$, $y \in \kk \parallel{\Gamma_{n-1}}{\cB}$ irreducible cocycles with $n,m>0$. If $x \smile y \neq 0$, then $y \smile x = 0$. 
\end{lemma}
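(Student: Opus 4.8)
The plan is to exploit \cref{inter} together with the triangularity of $A$ to force a product $y \smile x$ to vanish whenever $x \smile y \neq 0$. First I would write $x = \sum_i \alpha_i (\parallel{p_i}{b_i}) \in \kk \parallel{\Gamma_{m-1}}{\cB}$ and $y = \sum_j \beta_j (\parallel{p'_j}{b'_j}) \in \kk \parallel{\Gamma_{n-1}}{\cB}$. By \cref{inter}, since both are irreducible cocycles in positive degree, there are fixed nontrivial paths $\tilde{p}, \tilde{b}$ with $p_i = c_i \tilde{p} a_i$ and $b_i = c_i \tilde{b} a_i$, and similarly fixed $\tilde{p}', \tilde{b}'$ with $p'_j = c'_j \tilde{p}' a'_j$ and $b'_j = c'_j \tilde{b}' a'_j$. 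The crucial point is that \emph{every} summand of $x$ (resp. $y$) has the same ``interior'' $\tilde{p},\tilde{b}$ (resp. $\tilde{p}',\tilde{b}'$), so the positions where these cocycles genuinely differ from the identity are pinned to a single subpath.

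\medskip
\noindent\textbf{Translating $x \smile y \neq 0$ into a geometric overlap condition.}
Using the cup product formula \cref{cup_prod_Hom}, a summand of $x \smile y$ evaluated on an $(m+n-1)$-ambiguity $q$ comes from a decomposition $q = e\, p_i\, c\, p'_j\, a$ contributing $e\, b_i\, c\, b'_j\, a$. Because $x \smile y \neq 0$, some such term survives. I would then observe that by graded-commutativity of the cup product, $y \smile x = \pm\, x \smile y$ in \emph{cohomology}, but at the chain level $y \smile x$ is computed by decompositions $q = e\, p'_j\, c\, p_i\, a$ with $p'_j$ placed \emph{before} $p_i$. The heart of the argument is to show that, for a triangular algebra, the order $(p_i \text{ then } p'_j)$ being nonzero forces the reversed order $(p'_j \text{ then } p_i)$ to produce only terms that are zero in $A$ — i.e. the concatenated paths with the interior replacements $\tilde b, \tilde b'$ fail to lie in $\cB$, or the ambiguity decomposition does not exist. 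Triangularity enters because the absence of oriented cycles means $p_i$ and $p'_j$ cannot ``wrap around'' to overlap in both orders; once $\tilde p$ precedes $\tilde p'$ along a path, $\tilde p'$ cannot also precede $\tilde p$.

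\medskip
\noindent\textbf{The main obstacle and how I would handle it.}
The hard part will be ruling out that $y \smile x$ is nonzero on cohomology by some \emph{cancellation} among chain-level summands rather than each summand vanishing individually. I expect the cleanest route is not to analyze $y \smile x$ directly summand-by-summand, but to use the graded-commutativity relation $y \smile x = (-1)^{mn}\, x \smile y$ in cohomology as an input, and argue about representatives: since $x \smile y$ is represented by a cocycle concentrated on decompositions with $\tilde p$ occurring to the left of $\tilde p'$, and since triangularity prevents any $(m+n-1)$-ambiguity from simultaneously admitting a decomposition with $\tilde p'$ to the left of $\tilde p$, the cocycle representing $y \smile x$ must be supported on a disjoint set of ambiguities. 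I would then show that no nonzero cohomology class can be supported both ways unless it is a coboundary, forcing $y \smile x = 0$ in $\HH{A}$. Concretely, I would argue that any $(m+n-1)$-ambiguity $q$ supporting a nonzero term of $y \smile x$ would, via \cref{inter} applied to the product's interior, have to contain $\tilde p'$ before $\tilde p$; combined with the surviving decomposition from $x \smile y$, this yields an oriented path from $t(\tilde p)$ back to $s(\tilde p)$, i.e. an oriented cycle, contradicting triangularity. This overlap/cycle analysis — carefully tracking the positions $a_i, c_i, a'_j, c'_j$ and showing the two orderings are mutually exclusive in a quiver without oriented cycles — is the step I anticipate requiring the most care.
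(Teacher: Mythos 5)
Your core argument is correct and is essentially the paper's own proof: apply \cref{inter} to $x$ and $y$ to obtain the nontrivial interiors $\tilde{p}$ and $\tilde{p}'$; then $x \smile y \neq 0$ forces a decomposition $q = e\tilde{p}c\tilde{p}'a$ of some $(n+m-1)$-ambiguity, any nonzero term of $y \smile x$ would force a decomposition $q' = e'\tilde{p}'c'\tilde{p}a'$ of some other ambiguity, and together these make $\tilde{p}c\tilde{p}'c'$ an oriented cycle in $Q$, contradicting triangularity.

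Two clarifications on your framing, since the surrounding discussion muddies this. First, the detour through graded-commutativity and cohomology classes in your last paragraph is unnecessary and would in any case prove the wrong statement: the lemma asserts $y \smile x = 0$ as an element of $\kk\parallel{\Gamma_{n+m-1}}{\cB}$, i.e.\ at the chain level, not merely that the class of $y \smile x$ vanishes in $\HH{A}$; graded-commutativity is not an input to this lemma but is used only afterwards, in the proof of \cref{main}, where the chain-level statement feeds into it. Second, your worry about cancellation among chain-level summands of $y \smile x$ is moot: since the goal is vanishing, it suffices to show that no summand can occur at all, and that is exactly what the cycle argument gives --- every potential term of $y \smile x$ requires, after absorbing the fixed prefixes and suffixes supplied by \cref{inter}, a decomposition placing $\tilde{p}'$ and $\tilde{p}$ in the order opposite to the one furnished by $x \smile y \neq 0$, and no quiver without oriented cycles admits both orders.
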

\begin{proof}
We take $\tilde{p}$ and $\tilde{p}'$ for $x$ and $y$ as in \cref{inter}. Since $x \smile y \neq 0$, there exists $q \in \Gamma_{n+m-1}$ such that $q = e \tilde{p} c \tilde{p}' a$. Let us suppose that $y\smile x\neq 0$. Then, there exists $q' \in \Gamma_{n+m-1}$ such that $q' = e' \tilde{p}' c' \tilde{p} a$. Hence, $\tilde{p} c \tilde{p}' c'$ is an oriented cycle in $Q$. This is a contradiction since $A$ is a triangular monomial algebra.
\end{proof}

\begin{theorem}\label{main}
Consider a triangular monomial algebra, $A=\kk Q/I$, with a basis of paths $\cB$. Then, the cup product on $\HH{A}$ is zero in positive degrees.
\end{theorem}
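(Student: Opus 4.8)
The plan is to play two properties of the product against each other: the cochain-level cup product of \cref{cup_prod_Hom} is not graded-commutative, whereas the induced product on $\HH{A}$ is. First I would reduce the statement to irreducible cocycles. By \cref{cocycle} the irreducible cocycles span each space of cocycles in $\kk \parallel{\Gamma_n}{\cB}$ as a $\kk$-vector space, so their cohomology classes span $\HH{A}$ in every positive degree; since $\smile$ is $\kk$-bilinear, it is enough to show that $[x] \smile [y] = 0$ in $\HH{A}$ for any irreducible cocycles $x \in \kk \parallel{\Gamma_{m-1}}{\cB}$ and $y \in \kk \parallel{\Gamma_{n-1}}{\cB}$ with $m,n>0$.

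The heart of the argument is a short dichotomy built on \cref{tria_cup_Hom}. Fix such $x$ and $y$. If the cochain $x \smile y$ is already zero, then trivially $[x] \smile [y] = 0$. Otherwise $x \smile y \neq 0$, so \cref{tria_cup_Hom} yields $y \smile x = 0$ as a cochain, hence $[y] \smile [x] = 0$ in $\HH{A}$; graded-commutativity of the cup product—valid because our $\smile$ agrees up to homotopy with the classical cup product coming from the bar resolution—then forces $[x] \smile [y] = (-1)^{mn} [y] \smile [x] = 0$. In either case the product of the two classes vanishes, and expanding arbitrary positive-degree classes over the spanning set of irreducible classes finishes the proof by bilinearity.

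I expect that the genuine difficulty lies entirely in the auxiliary results rather than in \cref{main} itself: \cref{inter} isolates a common core $\tilde{p}$ of the summands of an irreducible cocycle, and \cref{tria_cup_Hom} uses it to rule out $x \smile y$ and $y \smile x$ being simultaneously nonzero, since that would assemble an oriented cycle $\tilde{p} \, c \, \tilde{p}' \, c'$ in $Q$ and contradict triangularity. Granting those, the only subtlety in the present proof is conceptual: graded-commutativity alone never forces vanishing (it holds in many nonzero graded-commutative rings), so it is essential to feed in the asymmetric cochain-level vanishing from \cref{tria_cup_Hom}, and one must be careful that the \emph{irreducible} cocycles truly span the cocycles—and hence that their classes span $\HH{A}$—to justify the reduction to a spanning set.
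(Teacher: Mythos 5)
Your proposal is correct and follows essentially the same route as the paper's own proof: reduce to irreducible cocycles (which span the cocycles by \cref{cocycle}), apply \cref{tria_cup_Hom} to get the one-sided cochain-level vanishing, and then invoke graded-commutativity of the cup product on $\HH{A}$ to kill the remaining class. Your explicit dichotomy and the remark that graded-commutativity alone cannot force vanishing merely spell out what the paper's more compressed argument leaves implicit.
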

\begin{proof}
It is enough to show that the equivalence class of a cup product of irreducible cocycles is zero. Let $x \in \kk \parallel{\Gamma_{n-1}}{\cB}$ and $y \in \kk \parallel{\Gamma_{m-1}}{\cB}$ be irreducible cocycles. By \cref{tria_cup_Hom}, we have $x \smile y = 0$ or $y \smile x = 0$. Since $\smile$ is graded-commutative on $\HH[n+m]{A}$, we conclude that $\overline{x\smile y}$ is zero in $\HH[m+n]{A}$.
\end{proof}

Finally, we give an example to demonstrate that there are nontrivial examples of \cref{tria_cup_Hom}. That is, there exist irreducible cocycles $x$ and $y$ with $x \smile y \neq 0$ although $\overline{x \smile y} = 0$. 

\begin{example}
We consider the quiver 
\begin{equation*}
Q = 
\begin{tikzcd}
\bullet \ar[r,"\alpha_1" swap] \& \bullet \ar[r,"\alpha_2" swap] \ar[r,bend left,"\beta"] \& \bullet \ar[r,"\alpha_3" swap] \& \bullet \ar[r,"\alpha_4" swap] \ar[r,bend left,"\gamma"] \& \bullet \ar[r,"\alpha_5" swap] \& \bullet
\end{tikzcd}
\end{equation*}
with $I = (\alpha_5 \alpha_4, \alpha_4 \alpha_3, \alpha_3 \alpha_2, \alpha_2 \alpha_1)$. It is straightforward to check that
\begin{equation*}
x \colonequals \parallel{\alpha_4 \alpha_3}{\gamma \alpha_3} + \parallel{\alpha_5 \alpha_4}{\alpha_5 \gamma} \quad \text{and} \quad y \colonequals \parallel{\alpha_2 \alpha_1}{\beta \alpha_1} + \parallel{\alpha_3 \alpha_2}{\alpha_3 \beta}
\end{equation*}
are irreducible cocycles. Then
\begin{equation*}
y \smile x = \parallel{\alpha_4 \alpha_3 \alpha_2 \alpha_1}{\gamma \alpha_3 \beta \alpha_1} + \parallel{\alpha_5 \alpha_4 \alpha_3 \alpha_2}{\alpha_5 \gamma \alpha_3 \beta} = \partial(\parallel{\alpha_4 \alpha_3 \alpha_2}{\gamma \alpha_3 \beta}) \neq 0\,.
\end{equation*}
\end{example}

\bibliographystyle{amsalpha}
\bibliography{./References}

\end{document}